\newtheorem{theorem}{Theorem}[section]
\newtheorem{lemma}[theorem]{Lemma}
\newtheorem{proposition}[theorem]{Proposition}
\newtheorem{question}[theorem]{Question}
\newtheorem{conjecture}[theorem]{Conjecture}
\theoremstyle{definition}
\newtheorem{definition}[theorem]{Definition}
\newtheorem{remark}[theorem]{Remark}
\numberwithin{equation}{section}
\newcommand{\C}{\mathbb{C}}
\begin{document}

\title[Loewner PDE in infinite dimensions]
{Loewner PDE in infinite dimensions}

\author[I. Graham]{Ian Graham}
\address{Department of Mathematics, University of Toronto, Toronto, Ontario M5S 2E4,
Canada}
\email{graham@math.toronto.edu}
\thanks{I. Graham was partially supported by the Natural Sciences
and Engineering Research Council of Canada under Grant A9221}

\author[H. Hamada]{Hidetaka Hamada}
\address{
Faculty of Science and Engineering,
Kyushu Sangyo University,
3-1 Matsukadai, 2-Chome, Higashi-ku,
813-8503, Fukuoka,
Japan}
\email{h.hamada@ip.kyusan-u.ac.jp}
\thanks{H. Hamada was partially supported by JSPS KAKENHI Grant Number JP19K03553.}

\author[G. Kohr]{Gabriela Kohr}
\address{Faculty of Mathematics and Computer Science,
Babe\c{s}-Bolyai University, 1 M. Kog\u{a}l\-niceanu Str., 400084,
Cluj-Napoca, Romania}
\email{gkohr@math.ubbcluj.ro}

\author[M. Kohr]{Mirela Kohr}
\address{Faculty of Mathematics and Computer Science,
Babe\c{s}-Bolyai University, 1 M. Kog\u{a}l\-niceanu Str., 400084,
Cluj-Napoca, Romania}
\email{mkohr@math.ubbcluj.ro}

\subjclass[2010]{Primary 32H02;
Secondary 30C45, 30C55, 30C80, 32K05, 46G20}

\begin{abstract}
In this paper, we prove the existence and uniqueness of the solution $f(z,t)$ of the Loewner PDE with normalization
$Df(0,t)=e^{tA}$,
where $A\in L(X,X)$ is such that $k_+(A)<2m(A)$,
on the unit ball of a separable reflexive complex Banach space $X$.
In particular, we obtain the biholomorphicity of the univalent Schwarz mappings $v(z,s,t)$
with normalization
$Dv(0,s,t)=e^{-(t-s)A}$ for $t\geq s\geq 0$,
where $m(A)>0$,
which satisfy the semigroup property
on the unit ball of a complex Banach space $X$.
We further obtain the biholomorphicity of $A$-normalized univalent subordination chains
under some normality condition
on the unit ball of a reflexive complex Banach space $X$.
We prove the existence of the biholomorphic solutions $f(z,t)$ of the Loewner PDE with normalization
$Df(0,t)=e^{tA}$
on the unit ball of
a separable reflexive complex Banach space $X$.
The results obtained in this paper give some positive answers to the open problems and conjectures
proposed by the authors in 2013.
\end{abstract}

\keywords{Herglotz vector field,
Loewner chain,
Loewner PDE,
separable reflexive complex Banach space}



\maketitle


\section{Introduction}\label{sec1}

It is well known that any univalent (holomorphic
and injective) mapping on a domain in $\mathbb{C}^n$ into $\mathbb{C}^n$ is also a biholomorphic mapping.
However, this result is no longer true in the case of infinite dimensions.
For instance, the mapping $f:\ell^2\to\ell^2$ defined by
$f(x)=(0, x_1,x_2,x_3,\ldots)$ for $x=(x_1,x_2,x_3, \ldots)\in\ell^2$,
is univalent on the unit ball of the space
$\ell^2$, but is not biholomorphic (cf. \cite{Su77}).
Moreover, the authors \cite[Example 3.19]{GHKK13} gave an example of biholomorphic mappings on the unit ball $\mathbb{B}$ of an infinite
dimensional complex Banach space $X$ which are not bounded on the
closed ball $\overline{\mathbb{B}}_r$ for $r\in (0,1)$.

Let $A\in L(\mathbb{C}^n)$ be such that
$m(A)>0$.
There exist normalized biholomorphic mappings $f$ which have
useful embeddings in subordination chains of the form $f(z,t)=e^{tA}z+\cdots$.
For example, if $f$ is a spirallike mapping with respect to $A$, then $f(z,t)=e^{tA}f(z)$ is a univalent
subordination chain. However,  in dimension
$n\geq 2$, in contrast to the one dimensional case,
there exist spirallike mappings $f$
which cannot be embedded in a subordination chain of the form $f(z,t)=z+\cdots$ (see e.g. \cite{DGHK}).
In connection with this
observation, the authors \cite{GHKK08} introduced the
class $S_A^0(\mathbb{B}^n)$ of mappings which have $A$-parametric
representation, i.e. the subclass of normalized biholomorphic mappings on $\mathbb{B}^n$ which consists of
those mappings $f$ that can be embedded in univalent subordination
chains $f(z,t)=e^{tA}z+\cdots$ such that
$\{e^{-tA}f(\cdot,t)\}_{t\geq 0}$ is a normal family on $\mathbb{B}^n$.
The authors proved that certain results which
hold for the class $S$ (growth, coefficient bounds, embedding
results, compactness) can be generalized to the above classes. It is
therefore of interest to consider subordination chains of the form
$f(z,t)=e^{tA}z+\cdots$ in the study of univalent mappings
in several complex variables (see also \cite{GHKK14}).

Loewner's partial differential equation (Loewner PDE)
$$
\frac{\partial f}{\partial t}(z,t)=Df(z,t)h(z,t),\quad \mbox{a.e.\ } t\in [0,\infty),
$$
received a great interest from
mathematicians since Charles Loewner \cite{Loewner}
introduced it in 1923 to study extremal problems  in the unit disc $\mathbb{D}\subset \mathbb{C}$ and, later,
P.P. Kufarev \cite{Kuf1943} and C. Pommerenke \cite{P65},  \cite{Pom} fully
developed the original theory, called nowadays as the Loewner theory. Such an
equation was a cornerstone in the de Branges' proof of the
Bieberbach conjecture.
Loewner's original theory has been extended
to higher
dimensional balls in $\mathbb{C}^n$ and successfully used to study
geometric properties of univalent mappings in one and higher dimensions
(see e.g. \cite{Ar}, \cite{B72},
\cite{BCM2}, \cite{CMG}, \cite{DGHK}, \cite{GHK02},
\cite{GHKK08}, \cite{GHKK08b}, \cite{GK03}, \cite{Pf74}, \cite{Pf75}, \cite{Po89},
\cite{Vo}).

The existence result for the solutions to the Loewner PDE
on the unit ball of $\mathbb{C}^n$ was proved by
Graham, Hamada and Kohr \cite{GHK02}.
(For  complete hyperbolic complex manifolds, see Arosio, Bracci, Hamada and Kohr \cite{ABHK};
cf. \cite{ArBr}, \cite{ABW}, \cite{BCM}, \cite{BCM1}, \cite{H15}).
Duren, Graham, Hamada and Kohr \cite[Theorem 3.1]{DGHK}
determined the form of arbitrary solutions of the Loewner PDE.
In the case of infinite dimensions,
Poreda \cite{Po89}, \cite{Por91} first studied subordination chains and the Loewner PDE on the unit ball of a complex Banach space.
The authors \cite{GHKK13} and
Hamada and Kohr \cite{HK04} considered a generalization to the unit ball of a reflexive complex Banach space.
However, in these papers,
the existence result for the solutions to the Loewner PDE has not been proved.
The authors \cite{GHKK13} studied $A$-normalized univalent subordination chains and
the Loewner PDE on the unit ball of a reflexive complex Banach space.
The main results in \cite{GHKK13} are as follows.
Notations will be explained in the next section.

\begin{theorem}
\label{thm-GHKK2013}
Let $\mathbb{B}$ be the unit ball of  a reflexive complex Banach space $X$ and let $h=h(z,t):\mathbb{B}\times [0,\infty)\to
X$ be a Herglotz vector field such that $Dh(0,t)=A$, $t\geq 0$, where $A\in L(X)$
is such that $k_+(A)<2m(A)$. Then the following statements hold:

$(i)$ For each $s\geq 0$ and $z\in \mathbb{B}$,
the initial value problem
$$\frac{\partial v}{\partial t}=-h(v,t),
\quad
t\in [s, \infty)\setminus E_{s,z},\quad v(z,s,s)=z,$$
where $E_{s,z}\subset [s,\infty)$ is a null set which depends only on $s$ and $z$,
has a unique solution $v=v(z,s,t)$ such that $v(\cdot,s,t)$ is a
univalent Schwarz mapping, $v(z,s,\cdot)$ is Lipschitz continuous on
$[s,\infty)$ uniformly with respect to $z\in \overline{\mathbb{B}}_r$, $r\in (0,1)$,
$Dv(0,s,t)=\exp(-A(t-s))$ for $t\geq s\geq 0$. Also, there exists the limit
\begin{equation}
\label{1.11}
\lim_{t\to\infty}e^{tA}v(z,s,t)=f(z,s)
\end{equation}
uniformly on each closed ball
$\overline{\mathbb{B}}_r$ for $r\in (0,1)$, $s\geq 0$ and
$f(z,t)$ is an $A$-normalized univalent subordination chain such that
$f(z,\cdot)$ is locally Lipschitz continuous
on $[0,\infty)$ uniformly with respect to $z\in\overline{\mathbb{B}}_r$,
$r\in (0,1)$.
In addition, if $\frac{\partial f}{\partial t}(z,t)$ exists
for $t\in [0,\infty)\setminus E$
and $z\in \mathbb{B}_\delta$, for some $\delta\in (0,1)$, where $E\subset [0,\infty)$
$($independent of $z)$ has
measure zero, then $\frac{\partial f}{\partial t}(\cdot, t)$ exists and
is holomorphic on $\mathbb{B}$ for $t\in [0,\infty)\setminus E$, and for each
$z\in \mathbb{B}$ there exists a set $E_z$ with $E\subset E_z \subset [0,\infty)$
of measure $0$ such that
$$\frac{\partial f}{\partial t}(z,t)=Df(z,t)h(z,t),\quad
t\in [0,\infty)\setminus E_z.$$

$(ii)$ Conversely, assume that there exists a standard solution
$f(z,t)=e^{tA}z+\cdots$ of the Loewner PDE
\begin{equation}
\label{Loewner-PDE2}
\frac{\partial f}{\partial t}(z,t)=Df(z,t)h(z,t),\quad t\in [0,\infty)\setminus E,\quad
\forall z\in \mathbb{B},
\end{equation}
where $E\subset [0,\infty)$ is a null set.
Also, assume that for each $r\in (0,1)$, there is $M(r)>0$
such that
\begin{equation}
\label{A-bounded-0}
\| e^{-tA}f(z,t)\|\leq M(r),
\quad
\| z\|\leq r,
\quad
t\geq 0.
\end{equation}
Then $f(z,t)$ is an $A$-normalized univalent subordination chain and
$(\ref{1.11})$ holds.
\end{theorem}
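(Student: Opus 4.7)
My plan is to construct the solution via the reverse Loewner flow $v(z,s,t)$ and then recover the subordination chain $f(z,t)$ as a limit at $t\to\infty$, following the Pommerenke-type strategy adapted to infinite dimensions. The spectral hypothesis $k_+(A)<2m(A)$ is what converts pointwise information into convergence uniform on each closed ball $\overline{\mathbb{B}}_r$, $r\in(0,1)$, and thereby yields a holomorphic, $A$-normalized limit.

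For part $(i)$, I would first establish existence and uniqueness of $v(z,s,t)$ by treating the backward Cauchy problem $\partial_t v=-h(v,t)$ as a Carath\'eodory ODE in $X$. Holomorphy of $h(\cdot,t)$ gives local Lipschitz estimates in $z$ via the Cauchy integral formula, while measurability and local integrability in $t$ are built into the Herglotz definition. The Herglotz structure ($h(0,t)\equiv 0$ together with the ``real part'' control on $h$) lets the solution be extended globally on $[s,\infty)$ with $v(\cdot,s,t)$ a univalent Schwarz mapping. Differentiating the ODE at $z=0$ yields $\partial_t[Dv(0,s,t)]=-A\,Dv(0,s,t)$, hence $Dv(0,s,t)=e^{-(t-s)A}$, and local Lipschitz continuity of $v(z,s,\cdot)$ follows from Schwarz-lemma bounds on $h(\cdot,t)-A(\cdot)$ near the origin.

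The crucial step is the existence of the limit \eqref{1.11}. Using $k_+(A)<2m(A)$ I would derive a uniform estimate $\|e^{tA}v(z,s,t)\|\le C(r)$ for $\|z\|\le r$ and $t\ge s$, by a Gronwall analysis of $\varphi(t)=\|e^{tA}v(z,s,t)\|$ that pits the decay rate of $e^{-tA}$ in ``large'' spectral directions against the contraction rate of the Herglotz flow. Combined with reflexivity of $X$ (weak compactness of bounded sets) and a Vitali-type theorem for holomorphic mappings, this produces a holomorphic limit $f(z,s)$ uniformly on each $\overline{\mathbb{B}}_r$, with $Df(0,s)=e^{sA}$. The subordination chain identity $f(z,s)=f(v(z,s,t),t)$ then passes to the limit from the semigroup relation $v(z,s,u)=v(v(z,s,t),t,u)$, which simultaneously yields univalence of $f(\cdot,s)$ and local Lipschitz continuity of $f(z,\cdot)$. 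Under the additional hypothesis that $\partial_t f$ exists on $\mathbb{B}_\delta\times([0,\infty)\setminus E)$, Cauchy's integral formula extends holomorphy of $\partial_t f(\cdot,t)$ from $\mathbb{B}_\delta$ to $\mathbb{B}$, and differentiating $f(z,s)=f(v(z,s,t),t)$ in $s$ (using the ODE for $v$) produces the PDE at a.e.\ $t$.

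For part $(ii)$, given a standard solution with bound \eqref{A-bounded-0}, I would let $v(z,s,t)$ solve the characteristic ODE for $h$; the PDE then gives $f(v(z,s,t),t)=f(z,s)$, so \eqref{A-bounded-0} forces $\{e^{-tA}f(\cdot,t)\}_{t\ge 0}$ to be a normal family and combining with part $(i)$ delivers \eqref{1.11}. The main obstacle throughout is the infinite-dimensional subtlety that univalence does not entail biholomorphism and closed balls are not norm-compact: one cannot invoke compactness of the range to get normal families for free, as in $\mathbb{C}^n$. One must instead leverage reflexivity of $X$ together with the quantitative spectral condition $k_+(A)<2m(A)$ to show that the mappings $e^{tA}v(\cdot,s,t)$ are both uniformly bounded \emph{and} equicontinuous on each $\overline{\mathbb{B}}_r$, which is what ultimately promotes the weak/pointwise limit to a uniform limit with a holomorphic limit function satisfying the claimed normalization.
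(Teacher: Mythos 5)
Note first that the paper does not prove Theorem \ref{thm-GHKK2013}; it is recalled verbatim from \cite{GHKK13}, and your outline does follow the same overall Pommerenke-type strategy used there (Picard/Carath\'eodory ODE for $v$, decay estimates for the flow, limit $e^{tA}v(z,s,t)$, subordination identity passed to the limit). However, your treatment of the central step --- the existence of the limit \eqref{1.11} --- has a genuine gap. You propose to derive only a uniform bound $\|e^{tA}v(z,s,t)\|\le C(r)$ and then obtain the limit from ``reflexivity (weak compactness of bounded sets) and a Vitali-type theorem,'' later speaking of uniform boundedness plus equicontinuity ``promoting'' a weak/pointwise limit. This mechanism does not work: weak compactness yields at best weakly convergent subnets with no reason for a unique limit, and there is no Montel/normal-families theorem on the unit ball of an infinite-dimensional space (a point the paper itself stresses). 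The actual argument is a direct norm-Cauchy estimate requiring no compactness at all: from $\frac{\partial}{\partial t}\bigl[e^{tA}v(z,s,t)\bigr]=-e^{tA}\bigl(h(v(z,s,t),t)-Av(z,s,t)\bigr)$, the quadratic smallness $\|h(w,t)-Aw\|=O(\|w\|^2)$ near the origin (a consequence of the Carath\'eodory-type bounds on $\mathcal N(\mathbb B)$), the flow decay $\|v(z,s,t)\|=O(e^{-m(A)(t-s)})$ as in \eqref{5.100a}, and $\|e^{tA}\|=O(e^{(k_+(A)+\varepsilon)t})$, one gets an integrand of order $e^{(k_+(A)+\varepsilon-2m(A))t}$, integrable on $[s,\infty)$ precisely because $k_+(A)<2m(A)$. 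This is what makes $e^{tA}v(z,s,t)$ norm-Cauchy uniformly on $\overline{\mathbb B}_r$; reflexivity enters elsewhere (a.e.\ differentiability of Lipschitz curves, so that the ODE makes sense almost everywhere, and the support-functional arguments).

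A second, smaller gap: you claim the identity $f(z,s)=f(v(z,s,t),t)$ ``simultaneously yields univalence of $f(\cdot,s)$,'' but this is circular --- the identity gives injectivity of $f(\cdot,s)$ only if $f(\cdot,t)$ is already known to be injective, and in infinite dimensions one cannot invoke a Hurwitz-type theorem to pass univalence to the locally uniform limit. In \cite{GHKK13} univalence of the limit is obtained from a quantitative lower distortion bound for $e^{tA}\bigl(v(z,s,t)-v(w,s,t)\bigr)$ (again powered by $k_+(A)<2m(A)$), and that step should be made explicit. The remainder of your outline --- the normalization $Dv(0,s,t)=e^{-(t-s)A}$, the Lipschitz estimates, the difference-quotient derivation of the PDE, and the part (ii) argument via the characteristic ODE --- is consistent with the original proof, modulo the same caveat that ``normal family'' language must everywhere be replaced by explicit quantitative estimates.
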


The authors \cite{GHKK13} also gave the following
open problems and conjectures
on the unit ball $\mathbb{B}$ of a reflexive complex Banach space $X$.
Note that these
open problems and conjectures are true
if $X=\mathbb{C}^n$ (see \cite{DGHK}, \cite{GHKK08}, \cite{GHKK08b}; cf. \cite{GHK02}).

\begin{conjecture}[Conjecture 5.1 of \cite{GHKK13}]
\label{c.biholomorphic}
Let $A\in L(X)$ be such that $k_+(A)<2m(A)$.
If $f(z,t)$ is the $A$-normalized univalent subordination chain given by $(\ref{1.11})$,
then $f(\cdot,t)$ is biholomorphic on $\mathbb B$ for $t\geq 0$, and
$f(z,t)$ is a standard solution of
$(\ref{Loewner-PDE2})$.
\end{conjecture}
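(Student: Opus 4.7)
The plan is to identify the $f$ from (\ref{1.11}) with a biholomorphic standard PDE solution produced by the existence theorem for the Loewner PDE announced in the abstract; both conclusions of the conjecture then follow at once from this identification.

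First, apply the existence theorem for biholomorphic solutions of the Loewner PDE, announced in the abstract and proved elsewhere in the paper on the unit ball of a separable reflexive complex Banach space $X$, to the Herglotz vector field $h$ of the conjecture. This produces a biholomorphic standard solution $g(z,t)=e^{tA}z+\cdots$ of (\ref{Loewner-PDE2}) for this particular $h$. One expects the construction of $g$ to yield the normality bound (\ref{A-bounded-0}) as well (otherwise an extra step is required, using biholomorphicity and the $A$-normalization of $g$ together with $k_+(A)<2m(A)$ to derive it).

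Next, apply Theorem \ref{thm-GHKK2013}(ii) to $g$. Since $g$ is a standard solution of (\ref{Loewner-PDE2}) with the given $h$ and satisfies (\ref{A-bounded-0}), the theorem tells us that $g$ is an $A$-normalized univalent subordination chain and that (\ref{1.11}) holds with $g$ in place of $f$. But the $v$ appearing in (\ref{1.11}) is the unique solution of the initial value problem in Theorem \ref{thm-GHKK2013}(i), and this same $v$ is what defines the $f$ of the conjecture via (\ref{1.11}). Hence $g\equiv f$, so $f(\cdot,t)$ is biholomorphic for every $t\geq 0$ and $f$ is a standard solution of (\ref{Loewner-PDE2}), as claimed. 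An alternative route to the last step, if the invocation of (\ref{A-bounded-0}) is inconvenient, is to set $v_g(z,s,t):=g(\cdot,t)^{-1}(g(z,s))$, differentiate the identity $g(v_g,t)=g(z,s)$ in $t$, use the PDE for $g$ to deduce that $v_g$ satisfies $\partial_t v_g=-h(v_g,t)$ with $v_g(z,s,s)=z$, conclude $v_g\equiv v$ by the uniqueness statement of Theorem \ref{thm-GHKK2013}(i), and then pass to the limit in $t$ using $k_+(A)<2m(A)$ to recover $g(z,s)=\lim_{t\to\infty}e^{tA}v(z,s,t)=f(z,s)$.

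The main obstacle is the existence theorem invoked in the first step: producing a biholomorphic standard PDE solution, together with the normality bound (\ref{A-bounded-0}), for an arbitrary Herglotz vector field $h$ with $Dh(0,t)=A$ satisfying $k_+(A)<2m(A)$ on the unit ball of a separable reflexive $X$. This is the substantial new analytic content of the paper beyond Theorem \ref{thm-GHKK2013}, and presumably the place where the separability and reflexivity hypotheses on $X$ play a decisive role (e.g.\ via a compactness or normal-family argument applied to an approximating sequence of biholomorphic chains, coupled with a Radon--Nikod\'ym-property argument for differentiability in $t$). Once the existence theorem is in place, the identification $f=g$ is essentially just the uniqueness half of the Loewner correspondence already recorded in Theorem \ref{thm-GHKK2013}(ii), and Conjecture \ref{c.biholomorphic} follows.
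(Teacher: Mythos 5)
There is a genuine gap, and it is essentially the whole proof: your first step invokes ``the existence theorem for biholomorphic solutions of the Loewner PDE announced in the abstract'' as a black box, but in this paper that existence theorem \emph{is} the resolution of Conjecture~\ref{c.biholomorphic}. Theorem~\ref{Herglotz-to-chain} states precisely that the mapping $f$ defined by the limit $(\ref{1.11})$ (equivalently $(\ref{2.8-reflexive})$) is an $A$-normalized \emph{biholomorphic} subordination chain satisfying the Loewner PDE; there is no independent construction of a biholomorphic standard solution $g$ for a general Herglotz vector field to which you could then apply Theorem~\ref{thm-GHKK2013}(ii). So the identification argument $g\equiv f$, while correct in outline, is vacuous: you have reduced the conjecture to itself. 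You do flag this honestly as ``the main obstacle,'' but flagging it does not discharge it, and your guess at how it is overcome (a normal-family/compactness argument on approximating biholomorphic chains) is not how the paper proceeds.

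What the paper actually does is prove the two conclusions directly for the $f$ of $(\ref{1.11})$. Biholomorphicity comes in stages: Proposition~\ref{lemma-transition} shows that a univalent family of Schwarz mappings with the semigroup property and $Dv(0,s,t)=e^{-(t-s)A}$ is automatically biholomorphic (a Lipschitz estimate gives invertibility of $Dv(z,s,t)$ for $t-s$ small, and the semigroup property propagates this to all $t\ge s$); Proposition~\ref{prop-Schwarz} gives the decay $\|v(z,s,t)\|\lesssim e^{-m(A)(t-s)}$; and Theorem~\ref{proposition-subordination-biholom} combines the normality bound $(\ref{bounded-reflexive})$ (which \emph{is} established in Proposition~\ref{Herglotz-to-chain-reflexive}, not merely ``expected'') with Cauchy estimates to get local biholomorphy of $f(\cdot,t)$ near the origin, then uses $f(z,s)=f(v(z,s,t),t)$ and the decay of $v$ to conclude biholomorphy on all of $\mathbb{B}$. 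The PDE is then obtained by differentiating the subordination identity $f(v(z,0,t),t)=f(z,0)$ in $t$ (Theorem~\ref{Herglotz-to-chain}), where the existence and holomorphy of $\partial f/\partial t$ rests on the Vitali-type Lemma~\ref{v-partial} --- this is where separability and reflexivity enter, roughly as you surmised for the $t$-differentiability but not for the biholomorphicity. None of this machinery appears in your proposal, so as written it does not constitute a proof.
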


\begin{conjecture}[Conjecture 5.3 of \cite{GHKK13}]
\label{conjecture1.3}
Let $A\in L(X)$ be such that $k_+(A)<2m(A)$.
Also, let $f(z,t)$ be an $A$-normalized univalent subordination
chain $f(z,t)$ such that
the condition $(\ref{A-bounded-0})$ holds. Then $f=f(\cdot,0)$ has
$A$-parametric representation.
\end{conjecture}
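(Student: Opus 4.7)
The plan is to reduce Conjecture~\ref{conjecture1.3} to Conjecture~\ref{c.biholomorphic}, which I expect to be established earlier in the paper. Starting from an $A$-normalized univalent subordination chain $f(z,t)$ satisfying $(\ref{A-bounded-0})$, I would first introduce the associated transition mappings $v(z,s,t)$, characterized by $f(z,s)=f(v(z,s,t),t)$ for $0\le s\le t$. These are univalent Schwarz mappings on $\B$ with $Dv(0,s,t)=e^{-(t-s)A}$ and the usual semigroup property. Using $(\ref{A-bounded-0})$ together with the spectral hypothesis $k_+(A)<2m(A)$, I would then show that $v(z,s,\cdot)$ is locally Lipschitz in $t$ uniformly on each $\overline{\B}_r$, $r\in(0,1)$.

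Next, differentiating the transition mappings in $t$ should produce a Herglotz vector field $h(z,t)$ on $\B\times[0,\infty)$ with $Dh(0,t)=A$, satisfying $\partial v/\partial t=-h(v,t)$ almost everywhere. Differentiating the identity $f(v(z,s,t),t)=f(z,s)$ in $t$ then shows that $f$ itself is a standard solution of the Loewner PDE $\partial f/\partial t=Df(z,t)h(z,t)$ associated to $h$. Since $(\ref{A-bounded-0})$ is in force, Theorem~\ref{thm-GHKK2013}(ii) applies and yields $f(z,s)=\lim_{t\to\infty}e^{tA}v(z,s,t)$, identifying $f$ with the chain produced from $h$ via the limit $(\ref{1.11})$.

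Once this identification is in hand, Conjecture~\ref{c.biholomorphic}, assumed already proved earlier in the paper, tells us that $f(\cdot,t)$ is biholomorphic on $\B$ for every $t\ge 0$; in particular, $f(\cdot,0)$ is a normalized biholomorphic mapping. The bound $(\ref{A-bounded-0})$ is precisely the local uniform boundedness of $\{e^{-tA}f(\cdot,t)\}_{t\ge 0}$ on $\B$, which in the reflexive setting delivers normality of the family. Thus $f(\cdot,0)$ is embedded in a univalent $A$-normalized subordination chain whose $e^{-tA}$-rescaling is a normal family, which is exactly what it means for $f(\cdot,0)$ to have $A$-parametric representation.

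The principal obstacle I anticipate is the second step: producing the Herglotz vector field $h$ from an abstract chain and verifying that the resulting $f$ is a genuine standard solution of the Loewner PDE satisfying the hypotheses of Theorem~\ref{thm-GHKK2013}(ii). Establishing existence of $\partial v/\partial t$ almost everywhere, and identifying its generator with a vector field of the correct class, will require careful use of equicontinuity coming from $(\ref{A-bounded-0})$ together with estimates that exploit $k_+(A)<2m(A)$ to control the $e^{tA}$-rescaled family on each fixed $\overline{\B}_r$. Once these analytic details are settled, the conclusion follows essentially mechanically by combining Conjecture~\ref{c.biholomorphic} with the normality supplied by $(\ref{A-bounded-0})$.
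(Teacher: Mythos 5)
The crux of your argument --- ``differentiating the transition mappings in $t$ should produce a Herglotz vector field $h(z,t)$'' --- is precisely the step that the paper cannot carry out in the generality of the conjecture, and you defer it to ``analytic details.'' To get $\partial v/\partial t$ existing for a.e.\ $t$ \emph{outside a null set independent of $z$}, with the derivative holomorphic in $z$ and the limit mapping in $\mathcal{N}(\mathbb{B})$, the paper invokes a Vitali-type theorem (Lemma \ref{v-partial}, from \cite{KDK19}) that is only available on \emph{separable} reflexive Banach spaces; this is Theorem \ref{v.differentiation}(i), and it is the reason the paper explicitly claims only a \emph{partial} answer to Conjecture \ref{conjecture1.3} (see the Remark after Theorem \ref{v.differentiation}). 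The conjecture is posed for arbitrary reflexive $X$. Equicontinuity from $(\ref{A-bounded-0})$ gives you Lipschitz continuity of $v(z,s,\cdot)$ (Proposition \ref{lemma-transition}), hence a.e.\ differentiability for each fixed $z$ by reflexivity, but it does not by itself give a common exceptional null set, holomorphy of the derivative in $z$, or strong measurability of $h(z,\cdot)$ --- all of which are required before $h$ qualifies as a Herglotz vector field and before Theorem \ref{thm-GHKK2013}(ii) can be invoked. Since your entire chain of deductions (standard solution of the PDE $\Rightarrow$ limit formula $\Rightarrow$ identification with the chain from $(\ref{1.11})$) passes through this construction, the gap is structural, not a technicality.

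Your architecture also differs from the paper's in a way worth noting. The paper obtains the limit $\lim_{t\to\infty}e^{tA}v(z,s,t)=f(z,s)$ \emph{directly} from the transition mappings, using the growth estimates of Proposition \ref{prop-Schwarz} (Theorem \ref{subordination-reflexive}); this part needs no Herglotz field and works on any reflexive $X$. You instead derive the limit only after constructing $h$ and appealing to Theorem \ref{thm-GHKK2013}(ii), which makes even that conclusion contingent on the separability-dependent step. Likewise, the detour through Conjecture \ref{c.biholomorphic} is unnecessary: biholomorphy of $f(\cdot,t)$ under $(\ref{A-bounded-0})$ is Theorem \ref{proposition-subordination-biholom}(i), proved for reflexive $X$ without reference to any Herglotz field. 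Finally, be careful with ``normality'': in infinite dimensions local uniform boundedness does not yield normality in the Montel sense, which is why the paper's definition of $S_A^0(\mathbb{B})$ replaces normality by boundedness; if $A$-parametric representation is read in that weak sense the conjecture is nearly tautological, so the substantive content is exactly the existence of the generating Herglotz field --- the point your proof leaves open.
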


\begin{question}[Question 5.4 of \cite{GHKK13}]
\label{question1.4}
Let $A\in L(X)$ be such that $k_+(A)<2m(A)$ and let
$f(z,t)$ be an $A$-normalized univalent subordination chain.
Does there exist a Herglotz vector field $h(z,t)=Az+\cdots$ such that
$$\frac{\partial f}{\partial t}(z,t)=Df(z,t)h(z,t),\quad \mbox{ a.e. }\quad
t\geq 0,\quad \forall z\in B?$$
\end{question}

\begin{question}[Question 5.5 of \cite{GHKK13}]
\label{proplem-Loewner_Range}
Let $A\in L(X)$ be such that $k_+(A)<2m(A)$ and let
$f_t(z)=f(z,t)$ be the $A$-normalized univalent subordination chain
given by $(\ref{1.11})$. Also, let $\Omega=\bigcup_{t\geq 0}f_t(B)$.
Is it true that $\Omega=X$?
\end{question}

Recently,
Hamada and Kohr \cite{HK2022}
generalized the existence and uniqueness result for the solution
of the Loewner PDE
from $\mathbb{C}^n$ to the case of separable reflexive complex Banach spaces.
They used the normalization  $Df(0,t)=\exp\left(\int_0^t a(\tau)d\tau\right) I$,
where
$a:[0,\infty)\to\mathbb{C}$ is a measurable function which satisfies some assumption,
and proved the existence of a univalent subordination chain which is a solution to the Loewner PDE.
However, it is not known whether it is a biholomorphic subordination chain.

In this paper, we prove the existence and uniqueness for the solutions $f(z,t)$ of the Loewner PDE with normalization
$Df(0,t)=e^{tA}$,
where $A\in L(X)$ with $k_+(A)<2m(A)$,
on the unit ball of a separable reflexive complex Banach space $X$.
The following arguments provide the novelty of our results. 
We obtain the biholomorphicity of the univalent Schwarz mappings $v(z,s,t)$
with normalization
$Dv(0,s,t)=e^{-(t-s)A}$ for $t\geq s\geq 0$,
where $m(A)>0$,
which satisfy the semigroup property
on the unit ball of a complex Banach space $X$ (Proposition \ref{lemma-transition}).
We further obtain the biholomorphicity of $A$-normalized univalent subordination chains
under some normality condition
on the unit ball of a reflexive complex Banach space $X$ (Theorem \ref{proposition-subordination-biholom}).
We determine the form of solutions of the Loewner PDE on the unit ball of a reflexive complex Banach space $X$ (Theorem \ref{t.solutions}).
We prove the existence of the biholomorphic solutions $f(z,t)$ of the Loewner PDE with normalization
$Df(0,t)=e^{tA}$
on the unit ball of
a separable reflexive complex Banach space $X$
 (Theorem \ref{Herglotz-to-chain}).
The results obtained in this paper give some positive answers to the above open problems and conjectures.
Note that the assumptions of our results are weaker than those in Theorem \ref{thm-GHKK2013}.

\section{Preliminaries}

Let $X$ and $Y$ denote two complex Banach space endowed with the norms $\|\cdot \|_X$ and $\|\cdot \|_Y$ respectively.
Let $\mathbb{B}_r$ be the open ball in $X$ centered at zero and of radius $r$, and let
$\mathbb{B}=\mathbb{B}_1$ be the open unit ball in $X$. Let $\overline{\mathbb{B}}_r$ be the closed ball
centered at zero and of radius $r$ in $X$. Also let $\mathbb{D}(\zeta,r)$ be the disc in the complex plane $\mathbb{C}$ centered at $\zeta$ and of radius $r$, and let
$\mathbb{D}=\mathbb{D}(0,1)$ be the unit disc in $\mathbb{C}$.
Let $L(X,Y)$ denote the space of bounded linear operators from
$X$ into $Y$ with the standard operator norm. In the case $X=Y$ we use the notation $L(X)$ instead of $L(X,X)$ and denote by $I$ the corresponding identity operator.

For a domain $\Omega\subset X$,
a mapping  $f:\Omega\to Y$
is said to be holomorphic if for each $z\in \Omega$ there exists a mapping
$Df(z)\in L(X,Y)$ such that
$$\lim_{h\to 0}\frac{\|f(z+h)-f(z)-Df(z)(h)\|_Y}{\|h\|_X}=0.$$
Let $H(\Omega, Y)$ be the set of holomorphic mappings from $\Omega$ into $Y$
and let $H(\Omega)=H(\Omega, X)$.
We say that $f\in H(\Omega)$ is biholomorphic if $f(\Omega)$ is a domain,
and the inverse $f^{-1}$ exists and is holomorphic on $f(\Omega)$.
We say that $f\in H(\Omega)$ is locally biholomorphic if for each
$z\in\Omega$, there exists a neighborhood $U$ of $z$ such that $f$ is a biholomorphic mapping on $U$.
If $f\in H(\Omega)$ is holomorphic and injective on $\Omega$,
then we say that $f$ is a univalent mapping.
If $f\in H(\mathbb{B})$ satisfies the conditions $f(0)=0$ and $Df(0)=I$,
then we say that $f$ is normalized.
The set of
normalized biholomorphic mappings from $\mathbb{B}$ into $X$
will be denoted by $S(\mathbb{B})$.

For each $z\in X\setminus\{0\}$, we define the set $T(z)$ by
$$T(z)=\{\ell_z\in L(X,\mathbb{C}):\ \ell_z(z)=\|z\|,\ \|\ell_z\|=1\},$$
which is non-empty in view of the Hahn-Banach theorem.
For $A\in L(X)$, let
\begin{eqnarray*}
m(A)&=&\inf\{\Re [\ell_z(A(z))]: \|z\|=1,\,\ell_z\in T(z)\},
\\
k(A)&=&\sup\{\Re [\ell_z(A(z))]: \|z\|=1,\,\ell_z\in T(z)\},
\\
\lvert V(A) \rvert &=&\sup \{ \lvert {\ell}_{z} (A(z)) \rvert :\, \|z\|=1,\, {\ell}_{z} \in T(z) \},
\\
k_+(A)&=&\max\{\Re\lambda: \lambda\in \sigma(A)\},
\end{eqnarray*}
where $\sigma(A)$ is the spectrum of $A$.
$\lvert V(A)\rvert$ is called the numerical radius of $A$
and $k_+(A)$ is called the upper exponential index of $A$.

\begin{lemma}(\cite[Lemma 1.1]{GHK12}; cf. \cite{DGHK})
\label{l1.30}
For $A\in L(X)$, the following estimates hold:
\begin{equation}
\label{2.57}
e^{-k(A)t}\leq\|e^{-tA}(u)\|\leq e^{-m(A)t},\quad t\in
[0,\infty),\quad \|u\|=1,
\end{equation}
\[
e^{m(A)t}\leq\|e^{tA}(u)\|\leq e^{k(A)t},\quad t\in
[0,\infty),\quad \|u\|=1.
\]
\end{lemma}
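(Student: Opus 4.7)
The plan is to prove the second pair of inequalities for $\|e^{tA}u_0\|$ first and then deduce the first pair by substitution. Fix $u_0$ with $\|u_0\|=1$, set $\phi(t):=\|e^{tA}u_0\|$, and observe that since $T(v) = T(v/\|v\|)$ for any nonzero $v\in X$, the definitions of $m(A)$ and $k(A)$ give
\[
m(A)\|v\| \leq \Re[\ell_v(Av)] \leq k(A)\|v\|, \quad \ell_v\in T(v).
\]

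To estimate the upper right Dini derivative of $\phi$ from above, for $h>0$ small pick $\ell_{t+h} \in T(e^{(t+h)A}u_0)$. Since $\|\ell_{t+h}\|=1$ one has $\Re[\ell_{t+h}(e^{tA}u_0)] \leq \|e^{tA}u_0\| = \phi(t)$, while $\Re[\ell_{t+h}(e^{(t+h)A}u_0)] = \phi(t+h)$. Using the expansion $e^{(t+h)A}u_0 - e^{tA}u_0 = hAe^{(t+h)A}u_0 + O(h^2)$, I would obtain
\[
\phi(t+h) - \phi(t) \leq h\,\Re[\ell_{t+h}(Ae^{(t+h)A}u_0)] + O(h^2) \leq h\,k(A)\,\phi(t+h) + O(h^2),
\]
which yields $D^{+}\phi(t) \leq k(A)\phi(t)$. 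Symmetrically, picking $\ell_t \in T(e^{tA}u_0)$ and using $\phi(t+h)\geq \Re[\ell_t(e^{(t+h)A}u_0)]$ gives $D^{+}\phi(t) \geq m(A)\phi(t)$. A Gronwall-type comparison for Dini derivatives then produces $\phi(0)e^{m(A)t} \leq \phi(t) \leq \phi(0)e^{k(A)t}$; since $\phi(0)=1$, this is precisely the second pair of estimates.

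For the first pair, set $v_0 := e^{-tA}u_0$ with $\|u_0\|=1$, so $\|e^{tA}v_0\|=1$. Applying the just-proven estimates to the unit vector $v_0/\|v_0\|$ yields $e^{m(A)t}\|v_0\| \leq 1 \leq e^{k(A)t}\|v_0\|$, i.e.\ $e^{-k(A)t}\leq \|e^{-tA}u_0\|\leq e^{-m(A)t}$, completing the proof.

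The main technical obstacle is that the norm on a general Banach space is not differentiable along nontrivial orbits, so one cannot simply differentiate $\phi$. The standard workaround is to evaluate carefully chosen supporting functionals at the two endpoints of each short time interval, producing one-sided Dini estimates adequate for a Gronwall-type comparison; this is where the Hahn--Banach set $T(\cdot)$ and the definitions of $m(A)$, $k(A)$ enter the argument.
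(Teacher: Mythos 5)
Your proof is correct, and it is essentially the standard argument behind this lemma (which the paper itself does not prove but quotes from \cite[Lemma 1.1]{GHK12}): one bounds the one-sided Dini derivatives of $t\mapsto\|e^{tA}u_0\|$ between $m(A)$ and $k(A)$ times the norm by evaluating support functionals from $T(\cdot)$ at the appropriate endpoint, and then integrates. The deduction of the $e^{-tA}$ estimates from the $e^{tA}$ ones via $v_0=e^{-tA}u_0$ is also the standard route, so there is nothing to flag.
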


Let
$$\mathcal{N}(\mathbb{B})=\{h\in H(\mathbb{B}):h(0)=0, \Re [\ell_z(h(z))]>0,
z\in \mathbb{B}\setminus\{0\}, \ell_z\in T(z)\}.$$
If $\mathbb{B}=\mathbb{D}$, then $f\in \mathcal{N}(\mathbb{D})$ with $f'(0)=1$ if and only if
$f(z)/z\in\mathcal{P}$, where
$$\mathcal{P}=\{p\in H(\mathbb{D}): p(0)=1, \Re p(z)>0, z\in \mathbb{D}\}$$
is the Carath\'eodory class.

\begin{definition}
\label{d.herglotz} (\cite{ABHK}, \cite{BCM}, \cite{DGHK})
Let $h:\mathbb{B}\times [0,\infty)\to X$. We say that
$h$ is a Herglotz vector field (or a generating vector field)
if the following conditions hold:
\begin{enumerate}
\item[(i)]
$h(\cdot,t)\in {\mathcal N}(\mathbb{B})$, for a.e. $t\geq 0$;
\item[(ii)]
$h(z,\cdot)$ is strongly measurable on $[0,\infty)$, for all $z\in\mathbb{B}$.
\end{enumerate}
\end{definition}


\begin{definition}
\label{d.A-Loewner-chain} (cf.\cite{GHKK08}, \cite{GHKK13}, \cite{GHKK13CAOT}, \cite{Pf74})
(i) Let $f,g\in H(\mathbb{B})$. We say that $f$ is subordinate to $g$ ($f\prec g$)
if there exists a Schwarz mapping $v$ such that $f=g\circ v$.

(ii) A mapping $f:\mathbb{B}\times [0,\infty )\to X$ is
called a subordination chain if $f(\cdot,t)\in H(\mathbb{B})$, $f(0,t)=0$ for $t\geq 0$, and
$f(\cdot,s)\prec f(\cdot,t)$, $0\leq s\leq t<\infty$.
If, in addition, $f(\cdot,t)$ is univalent on $\mathbb{B}$ for $t\geq 0$, then
we say that $f(z,t)$ is a univalent subordination chain.

(iii)
A subordination chain $f(z,t)$ is said to be $A$-normalized
if $Df(0,t)=e^{tA}$ for $t\geq 0$, where $A\in L(X)$.
\end{definition}

We will make use of the following lemmas.
Since the proofs are elementary, we omit them.

\begin{lemma}
\label{Schwarz-mapping1}
Let $f(z,t)$ be a subordination chain.
Assume that for each $t\geq 0$,
$f(\cdot, t)$ is univalent on a neighbourhood of the origin.
Then
\begin{enumerate}
\item[(i)]
there exists a unique Schwarz mapping $v=v(\cdot,s,t)$
$($called the transition mapping associated with $f(z,t)$$)$
such that
$f(\cdot,s)=f(v(\cdot,s,t),t)$, for $t\geq s\geq 0$;
\item[(ii)]
$v(z,s,s)=z$ for all $z\in \mathbb{B}$ and $s\geq 0$;
\item[(iii)]
$v(z,s,t)$ satisfies the following semigroup property.
\begin{equation}
\label{semigroup}
v(v(z,s,t), t,u)=v(z,s,u),
\quad
z\in \mathbb{B},
\quad
0\leq s\leq t\leq u<\infty.
\end{equation}
\end{enumerate}
\end{lemma}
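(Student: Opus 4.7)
The plan is to read (i) as the definition of subordination combined with a uniqueness argument; once (i) is in hand, parts (ii) and (iii) fall out from uniqueness applied to obviously-valid candidate mappings.

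For (i), existence is immediate from the definition of a subordination chain: since $f(\cdot,s)\prec f(\cdot,t)$ for $0\le s\le t$, there exists a Schwarz mapping $v(\cdot,s,t):\mathbb{B}\to\mathbb{B}$ with $f(\cdot,s)=f(v(\cdot,s,t),t)$. For uniqueness, suppose $v_1,v_2$ are two Schwarz mappings satisfying this identity. Let $U\subset\mathbb{B}$ be a neighbourhood of the origin on which $f(\cdot,t)$ is univalent. Since $v_1,v_2$ are Schwarz mappings, continuity at the origin gives a neighbourhood $V$ of $0$ with $v_j(V,s,t)\subset U$ for $j=1,2$. On $V$ the equality $f(v_1(z,s,t),t)=f(v_2(z,s,t),t)$ together with the injectivity of $f(\cdot,t)$ on $U$ forces $v_1\equiv v_2$ on $V$. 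The identity principle for holomorphic mappings on the connected domain $\mathbb{B}$ (applied coordinate-wise via bounded linear functionals from $X^*$) then yields $v_1\equiv v_2$ throughout $\mathbb{B}$.

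For (ii), the identity map $z\mapsto z$ is a Schwarz mapping that satisfies $f(z,s)=f(z,s)$, so by the uniqueness established in (i) we must have $v(z,s,s)=z$ for all $z\in\mathbb{B}$ and $s\ge 0$.

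For (iii), fix $0\le s\le t\le u$ and consider the composition $w(z):=v(v(z,s,t),t,u)$. Since $v(\cdot,s,t)$ maps $\mathbb{B}$ into $\mathbb{B}$ and $v(\cdot,t,u)$ is a Schwarz mapping, $w$ is holomorphic with $w(0)=0$ and $w(\mathbb{B})\subset\mathbb{B}$, i.e., $w$ is a Schwarz mapping. Moreover,
\begin{equation*}
f(w(z),u)=f\bigl(v(v(z,s,t),t,u),u\bigr)=f(v(z,s,t),t)=f(z,s)=f(v(z,s,u),u).
\end{equation*}
Thus both $w$ and $v(\cdot,s,u)$ are Schwarz mappings satisfying the defining identity for the transition mapping from level $s$ to level $u$, so the uniqueness statement in (i) forces $w(z)=v(z,s,u)$, which is exactly the semigroup property \eqref{semigroup}.

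The only mildly delicate point is the passage from local to global uniqueness of $v$ in infinite dimensions, but this is handled by the standard identity principle for Banach-space-valued holomorphic maps on a connected open set (composing with $\ell\in X^*$ reduces it to the scalar identity principle). Everything else is purely formal.
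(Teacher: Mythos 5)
Your proof is correct and is exactly the standard argument the paper has in mind when it omits the proof as ``elementary'': existence from the definition of subordination, uniqueness from local injectivity of $f(\cdot,t)$ near the origin plus the identity theorem for Banach-space-valued holomorphic mappings, and then (ii) and (iii) by exhibiting the identity map and the composition $v(v(\cdot,s,t),t,u)$ as competing Schwarz mappings and invoking that uniqueness. No gaps.
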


\begin{lemma}
\label{transition-univalent}
Let $v(z,s,t)$ be the transition mapping associated with a univalent subordination chain.
Then
$v(\cdot,s,t)$ is univalent on $\mathbb{B}$ for each $s$, $t$ with $0\leq s\leq t$.
\end{lemma}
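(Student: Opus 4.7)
The plan is to deduce the injectivity of $v(\cdot,s,t)$ directly from the characterizing identity $f(\cdot,s)=f(v(\cdot,s,t),t)$ supplied by Lemma \ref{Schwarz-mapping1}(i), together with the hypothesis that every time slice $f(\cdot,u)$ of the chain is univalent on $\mathbb{B}$. No analytic machinery is needed; the statement is essentially a formal consequence of the definition of a transition mapping once univalence of the chain is available.

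Concretely, fix $0\leq s\leq t$ and suppose $z_1,z_2\in\mathbb{B}$ satisfy $v(z_1,s,t)=v(z_2,s,t)$. Applying $f(\cdot,t)$ to both sides gives $f(v(z_1,s,t),t)=f(v(z_2,s,t),t)$, and the subordination identity $f(\cdot,s)=f(v(\cdot,s,t),t)$ then yields $f(z_1,s)=f(z_2,s)$. Since $f(\cdot,s)$ is univalent on $\mathbb{B}$ by assumption, $z_1=z_2$, which establishes the injectivity of $v(\cdot,s,t)$ on $\mathbb{B}$. There is no real obstacle here; the only point to be careful about is that the identity from Lemma \ref{Schwarz-mapping1}(i) is asserted globally on $\mathbb{B}$, and one uses univalence of $f(\cdot,s)$ on all of $\mathbb{B}$ rather than merely near the origin, which is available because the chain is univalent by hypothesis.
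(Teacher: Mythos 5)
Your argument is correct and is precisely the elementary reasoning the paper has in mind when it omits the proof: from $v(z_1,s,t)=v(z_2,s,t)$ one gets $f(z_1,s)=f(v(z_1,s,t),t)=f(v(z_2,s,t),t)=f(z_2,s)$, and univalence of $f(\cdot,s)$ on all of $\mathbb{B}$ forces $z_1=z_2$. Nothing further is needed, since Lemma \ref{Schwarz-mapping1}(i) supplies the identity $f(\cdot,s)=f(v(\cdot,s,t),t)$ globally on $\mathbb{B}$.
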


We denote by $S_A^0(\mathbb{B})$ be the family of the first elements $f(\cdot, 0)$ of
$A$-normalized univalent subordination chains $f(z,t)$ such that
$\{e^{-tA}f(\cdot,t)\}_{t\geq 0}$ is bounded for each closed ball strictly inside $\mathbb{B}$.
For example,
if $f$ is spirallike with respect to $A\in L(X)$ with
$m(A)>0$ and bounded for each closed ball strictly inside $\mathbb{B}$,
then
$f\in S_A^0(\mathbb{B})$.
For spirallike mappings, see e.g. \cite{ERS2}, \cite{ERS}, \cite{GHKK08}, \cite{GHKK08b}, \cite{Su77}.



\begin{definition}
\label{d.standard}
(i)
Let $g=g(z,t):\mathbb{B}\times [0,\infty)\to X$ be a mapping such that $g(\cdot,t)\in
H(\mathbb{B})$, $g(0,t)=0$, $t\geq 0$, and $g(z,\cdot)$ is locally
absolutely continuous on $[0,\infty)$ for $z\in \mathbb{B}$.
Assume that there exists a null set $E\subset [0,\infty)$ such that
$\frac{\partial g}{\partial t}(z,t)$ exists for $t\in [0,\infty)\setminus E$ and $z\in \mathbb{B}$,
and $g(z,t)$ satisfies the generalized Loewner PDE
\begin{equation}
\label{Loewner-g}
\frac{\partial g}{\partial t}(z,t)=Dg(z,t)h(z,t),\quad t\in [0,\infty)\setminus E,
\quad \forall z\in \mathbb{B},
\end{equation}
where
$h:\mathbb{B}\times [0,\infty)\to X$ satisfies the condition
$h(\cdot,t)\in {\mathcal N}(\mathbb{B})$, for a.e. $t\geq 0$.
In this case, we say that $g(z,t)$ is a standard solution of
the generalized Loewner PDE (\ref{Loewner-g}).

(ii) If, in addition,
$h=h(z,t)$ is a Herglotz vector field,
then
we say that $g(z,t)$ is a standard solution of the Loewner
PDE (\ref{Loewner-g}).
\end{definition}

From Vitali's theorem on complex Banach spaces \cite[Lemma 3.5]{KDK19}
(cf. \cite[Theorem 2.1]{AN00}),
Hamada and Kohr \cite{HK2022} obtained the following result.

\begin{lemma}
\label{v-partial}
Let $\Omega$ be a domain in a separable reflexive complex Banach space $X$.
Let $a:[0,\infty)\to\mathbb{C}$ be measurable and locally Lebesgue integrable
on $[0,\infty)$.
Let $v(z,t):\Omega\times [s, T]\to X$ be a mapping such that
$v(\cdot, t)\in H(\Omega)$ for each $t\in [s, T]$.
Assume that there exists a constant $M>0$ such that
\[
\| v(z,t_1)-v(z,t_2) \| \leq M \int_{t_1}^{t_2} \vert a ( \tau ) \vert d\tau,
\quad
z\in \Omega,
\quad
s \leq t_1 \leq t_2 \leq T.
\]
Then there exists a null set $E\subset  [s,T]$ such that
$\frac{\partial v(z,t)}{\partial t}$ exists and is holomorphic with respect to $z\in\Omega$ for all $t\in [s,T]\setminus E$.
\end{lemma}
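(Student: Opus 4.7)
The plan is to combine differentiation of real absolutely continuous functions with the Radon--Nikod\'ym property of separable reflexive Banach spaces, and then upgrade pointwise differentiability on a countable dense subset of $\Omega$ to holomorphic differentiability via the Banach-space Vitali theorem \cite[Lemma 3.5]{KDK19}.

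First I would reduce to a countable family of scalar problems. Set $A(t)=\int_s^t |a(\tau)|\,d\tau$, which is absolutely continuous and nondecreasing on $[s,T]$, and let $E_0\subset[s,T]$ denote the null set where $A$ fails to be differentiable with $A'(t)=|a(t)|$. Since $X$ is separable, pick a countable dense sequence $\{z_k\}\subset\Omega$. For each $z_k$, the hypothesis yields $\|v(z_k,t_1)-v(z_k,t_2)\|\leq M|A(t_1)-A(t_2)|$, so absolute continuity of the Lebesgue integral shows that $t\mapsto v(z_k,t)$ is absolutely continuous as an $X$-valued map. Since $X$ is separable and reflexive, it possesses the Radon--Nikod\'ym property, hence $v(z_k,\cdot)$ is differentiable off a null set $E_k$. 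Set $E=E_0\cup\bigcup_{k\geq1}E_k$; this is still a null set.

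Next I would fix $t_0\in[s,T]\setminus E$ and any sequence $h_n\to 0$ with $t_0+h_n\in[s,T]$, and study the difference quotients
\[
w_n(z)=\frac{v(z,t_0+h_n)-v(z,t_0)}{h_n},
\]
each of which lies in $H(\Omega,X)$. The hypothesis gives the uniform bound
\[
\|w_n(z)\|\leq M\,\frac{|A(t_0+h_n)-A(t_0)|}{|h_n|}\longrightarrow M|a(t_0)|,
\]
independent of $z\in\Omega$, so for $n$ large $\{w_n\}$ is uniformly bounded on $\Omega$, in particular locally bounded. On the dense subset $\{z_k\}$ we have $w_n(z_k)\to\frac{\partial v}{\partial t}(z_k,t_0)$ by construction. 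Vitali's theorem for holomorphic mappings on complex Banach spaces then implies that $\{w_n\}$ converges locally uniformly on $\Omega$ to some $w\in H(\Omega,X)$. As any other sequence of difference quotients yields a locally uniform limit agreeing with $w$ on the dense set $\{z_k\}$, the limit is independent of the choice of $\{h_n\}$; therefore $\frac{\partial v}{\partial t}(z,t_0)$ exists for every $z\in\Omega$ and coincides with $w(z)$, which is holomorphic in $z$.

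The delicate step I anticipate is the appeal to the Radon--Nikod\'ym property in the first half: one must check carefully that the given integral Lipschitz bound, together with absolute continuity of $\int|a|$, forces $t\mapsto v(z_k,t)$ to be $X$-valued absolutely continuous so that the Bochner differentiation theorem applies. Once that pointwise a.e.\ differentiability on the countable dense set is secured, the Vitali step is routine because the hypothesis supplies the required locally uniform bound on $\{w_n\}$ directly — uniformly in $z$, not merely locally — so holomorphic convergence of the difference quotients is immediate.
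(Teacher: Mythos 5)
Your argument is correct and is essentially the proof the paper points to (via \cite{HK2022}): almost-everywhere strong differentiability of $v(z_k,\cdot)$ on a countable dense subset of $\Omega$, obtained from the Radon--Nikod\'ym property of the separable reflexive space $X$ applied to the absolutely continuous maps $t\mapsto v(z_k,t)$, followed by the Banach-space Vitali theorem \cite[Lemma 3.5]{KDK19} (cf. \cite[Theorem 2.1]{AN00}) applied to the uniformly bounded holomorphic difference quotients. I see no gaps; the final step identifying the limit independently of the sequence $h_n$ is handled correctly.
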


\section{Schwarz mappings and $A$-normalized subordination chains
on the unit ball of a complex Banach space}
\label{continuity}

All along this section, we assume that $X$ is a complex Banach space.
We begin this section with the Lipschitz continuity
and the equivalence of the univalency and the biholomorphicity of a family of Schwarz mappings $v(z,s,t)$
which satisfy the semigroup property
$(\ref{semigroup})$
with
$Dv(0,s,t)=e^{-(t-s)A}$ for $t\geq s\geq 0$
on the unit ball of $X$
(cf. \cite[Lemma 3.1]{HK04}, \cite{HK2022}).

\begin{proposition}
\label{lemma-transition}
Let $\mathbb{B}$ be the unit ball of a complex Banach space $X$ and let
$A\in L(X)$ be such that $m(A)>0$.
Let $v(z,s,t)$ be a family of Schwarz mappings on $\mathbb{B}$
which satisfy the semigroup property
$(\ref{semigroup})$
with
$Dv(0,s,t)=e^{-(t-s)A}$ for $t\geq s\geq 0$.
Then $v(z,s,s)=z$ for $z\in \mathbb{B}$ and $s\geq 0$
and we have
\begin{enumerate}
\item[(i)]
for any $r\in(0,1)$,
there exists a constant $M(r)>0$ such that
%
\[\label{v-continuous-t}
\| v(z,s,t_1)-v(z,s,t_2)\| \leq
M(r)\| A\|(t_2-t_1),
\quad
\| z\|\leq r,
\quad
0\leq s\leq t_1 < t_2.
\]

\item[(ii)]
for any $r\in(0,1)$,
there exists a constant $\tilde{M}(r)>0$ such that
\[
\| v(z,s_1,t)-v(z,s_2,t)\| \leq
\tilde{M}(r)\| A\|(s_2-s_1),
\quad
\| z\|\leq r,
\quad
0\leq s_1<s_2 \leq t.
\]
\end{enumerate}

Moreover, if $v(z,s,t)$ is univalent on  $\mathbb{B}$ for all $t\geq s\geq 0$,
then $v(z,s,t)$ is biholomorphic on $\mathbb{B}$ for all $t\geq s\geq 0$.
\end{proposition}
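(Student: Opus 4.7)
The plan is to first prove $v(z,s,s)=z$, then establish the Lipschitz bound (i) in $t$, derive (ii) from (i) via Cauchy's inequality, and finally use the Lipschitz continuity to upgrade univalency to biholomorphicity.

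Taking $t=u=s$ in $(\ref{semigroup})$ gives $v(v(z,s,s),s,s)=v(z,s,s)$, so $\phi_s:=v(\cdot,s,s)$ is idempotent; hence every point in $\phi_s(\mathbb{B})$ is a fixed point of $\phi_s$. Since $\phi_s(0)=0$ and $D\phi_s(0)=I$, the holomorphic inverse function theorem on complex Banach spaces shows that $\phi_s(\mathbb{B})$ contains an open neighbourhood of $0$ on which $\phi_s=\mathrm{id}$, and the identity principle on the connected domain $\mathbb{B}$ extends this to $v(z,s,s)=z$ for all $z\in\mathbb{B}$.

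For (i), the semigroup yields $v(z,s,t_2)-v(z,s,t_1)=v(w,t_1,t_2)-w$ with $w:=v(z,s,t_1)\in\overline{\mathbb{B}}_r$, reducing the task to bounding $\|v(w,t_1,t_2)-w\|$ for $\|w\|\leq r$ uniformly in $t_1$. Decomposing $v(w,t_1,t_2)-w=(e^{-\tau A}-I)w+R(w,t_1,t_2)$ with $\tau:=t_2-t_1$, the linear part is bounded by $\|A\|\tau e^{\tau\|A\|}r$ via the integral representation of $e^{-\tau A}-I$. The nonlinear remainder $R$ admits the uniform Schwarz-type bound $\|R(w,t_1,t_2)\|\leq 2\|w\|^2$, obtained by scalarizing via $\ell\in X^*$ and applying the scalar Schwarz lemma to $\zeta\mapsto\ell(v(\zeta u,t_1,t_2))$ on $\mathbb{D}$, followed by Hahn-Banach. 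The main technical obstacle is to upgrade this bound on $R$ to one of order $\|A\|\tau$: the strategy combines the already established $v(\cdot,t_1,t_1)=\mathrm{id}$ with Cartan's uniqueness theorem, applied to subsequential limits of the normal family $\{v(\cdot,t_1,t_1+\tau)\}_{\tau\geq 0}$—which forces uniform convergence on compacta $v(\cdot,t_1,t_1+\tau)\to\mathrm{id}$ as $\tau\to 0^+$—and iterates the semigroup over $n$ substeps of length $\tau/n$, exploiting the contractivity $\|e^{-\tau A}u\|\leq e^{-\tau m(A)}$ coming from $m(A)>0$. For large $\tau$ the trivial bound $\|v(z,s,t_1)-v(z,s,t_2)\|\leq 2r$ is absorbed by enlarging $M(r)$.

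Part (ii) follows from (i) and Cauchy's inequality. Writing $v(z,s_1,t)-v(z,s_2,t)=v(\tilde w,s_2,t)-v(z,s_2,t)$ with $\tilde w:=v(z,s_1,s_2)\in\overline{\mathbb{B}}_r$, and noting that $\|v(\cdot,s_2,t)\|\leq 1$ on $\mathbb{B}$ yields $\|Dv(\cdot,s_2,t)\|\leq(1-r)^{-1}$ on $\overline{\mathbb{B}}_r$ by Cauchy's inequality, we obtain
\[
\|v(z,s_1,t)-v(z,s_2,t)\|\leq\frac{1}{1-r}\|v(z,s_1,s_2)-v(z,s_1,s_1)\|\leq\frac{M(r)}{1-r}\|A\|(s_2-s_1).
\]
For biholomorphicity, assuming univalency, it suffices to show that $Dv(z,s,t)$ is invertible for every $z\in\mathbb{B}$. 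Applying Cauchy's inequality to the holomorphic map $z\mapsto v(z,s,t)-z$, bounded by $M(r')\|A\|(t-s)$ on $\overline{\mathbb{B}}_{r'}$ by (i), yields $\|Dv(z,s,t)-I\|\leq C(r,r')\|A\|(t-s)$ on $\overline{\mathbb{B}}_r$ for $r<r'<1$, so $Dv(z,s,t)$ is invertible via a Neumann series whenever $t-s$ is sufficiently small depending on $r$. The semigroup factorization $Dv(z,s,t)=Dv(v(z,s,u),u,t)\cdot Dv(z,s,u)$, combined with a finite subdivision of $[s,t]$ into small subintervals, propagates invertibility to all $t\geq s$ and all $\|z\|<1$. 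Together with the univalency assumption, the standard inverse function theorem at each point then shows that $v(\cdot,s,t)$ is an open map with holomorphic inverse on its image, hence biholomorphic.
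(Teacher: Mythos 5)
Most of your write-up tracks the paper's proof: the identity-principle argument for $v(z,s,s)=z$, the derivation of (ii) from (i) via a derivative bound on $v(\cdot,s_2,t)$, and the biholomorphicity argument (Cauchy estimate gives $\|I-Dv(z,s,t)\|\leq L(r)(t-s)$, Neumann series for small $t-s$, then propagation by the semigroup factorization over a subdivision of $[s,t]$, and finally univalence plus local invertibility) are all essentially what the paper does. The problem is part (i), which is the heart of the proposition, and there your argument has a genuine gap that you yourself flag as ``the main technical obstacle.'' The decomposition $v(w,t_1,t_2)-w=(e^{-\tau A}-I)w+R$ with the Schwarz-lemma bound $\|R\|\leq 2\|w\|^2$ cannot be upgraded by the means you describe. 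First, normal-family (Montel) arguments are not available on the unit ball of a general complex Banach space: bounded families of holomorphic maps need not admit locally uniformly convergent subsequences in infinite dimensions, which is precisely one of the obstructions this paper is organized around, so ``subsequential limits of the normal family $\{v(\cdot,t_1,t_1+\tau)\}$'' combined with Cartan's uniqueness theorem is not licensed. Second, even granting $v(\cdot,t_1,t_1+\tau)\to\mathrm{id}$ as $\tau\to 0^+$, that is a qualitative statement and yields no Lipschitz rate $O(\|A\|\tau)$, let alone one uniform in $t_1$. Third, the substep iteration does not close: telescoping over $n$ steps of length $\tau/n$ and summing the per-step errors contributes $n\cdot O(\|w_j\|^2)$ from the quadratic remainders, and since the Schwarz bound only gives $\|w_j\|\leq r$, this diverges as $n\to\infty$; the quadratic bound simply does not see $\tau$.

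The missing ingredient is the one the paper uses: $h_{t_1,t_2}(z):=z-v(z,t_1,t_2)$ belongs to the Carath\'eodory-type class $\mathcal{N}(\mathbb{B})$ (via \cite[Lemma 3]{Su73}, since $v(\cdot,t_1,t_2)$ is a Schwarz mapping with $Dv(0,t_1,t_2)=e^{-(t_2-t_1)A}$ and $m(A)>0$), and for $h\in\mathcal{N}(\mathbb{B})$ one has the Harnack-type growth estimate $\|h(z)\|\leq M(r)\,\lvert V(Dh(0))\rvert$ for $\|z\|\leq r$ (\cite[Lemma 2.4]{GHKK13}). Since $Dh_{t_1,t_2}(0)=I-e^{-(t_2-t_1)A}$ has numerical radius at most $\|A\|(t_2-t_1)$ (estimate $\varphi_w(0)-\varphi_w(1)$ for $\varphi_w(t)=\ell_w(e^{-(t_2-t_1)tA}w)$ using $\|e^{-\tau A}\|\leq e^{-m(A)\tau}$ on the unit sphere), this controls the full nonlinear map --- including your remainder $R$ --- by first-order data at the origin; that positivity-based control is exactly what the plain Schwarz lemma cannot supply. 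With this lemma in hand, (i) follows by substituting $z\mapsto v(z,s,t_1)$ and invoking the semigroup property, and the remainder of your argument goes through.
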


\begin{proof}
Let $t\geq s\geq 0$ be fixed.
In view of the semigroup property
$(\ref{semigroup})$,
we have
\[
v(v(z,s,s), s,t)=v(z,s,t).
\]
Since $Dv(0,s,t)$ has a bounded inverse,
$v(\cdot, s, t)$ is univalent on a neighbourhood of the origin
by the inverse mapping theorem (see e.g. \cite[Theorem II.2.3]{FV}),
which implies that $v(z,s,s)=z$ holds on a neighbourhood of the origin.
Then, by using the identity theorem for holomorphic mappings,
we have $v(z,s,s)=z$ on $\mathbb{B}$.

Next, we will prove (i).
The proof of this part is similar to that in the proof of
\cite[Lemma 3.1]{HK2022}.
For completeness, we give a detailed proof here.
Let $ 0\leq s\leq t_1 < t_2<\infty$ be fixed.
Let
$
h_{t_1,t_2}(z)={z-v(z,t_1,t_2)},
$
for $z\in \mathbb{B}$.
Since $v(\cdot, t_1,t_2)$ is a Schwarz mapping
with $Dv(0,t_1,t_2)=e^{-(t_2-t_1)A}$,
as in the proof of \cite[Lemma 3.1]{HK04},
we obtain
$h_{t_1,t_2}\in {\mathcal N}(\mathbb{B})$
by using \cite[Lemma 3]{Su73}.
Then by \cite[Lemma 2.4]{GHKK13}, we deduce that for each $r\in (0,1)$,
there exists a constant $M(r)>0$
such that
$
\| h_{t_1,t_2}(z)\|\leq M(r)\vert V(Dh_{t_1,t_2}(0))\vert$,
$\| z\|\leq r$.
This inequality implies that
\[
\| z-v(z,t_1,t_2)\| \leq M(r)\sup_{\| w\|=1, \ell_w\in T(w)}
\left\lvert 1-\ell_{w} (e^{-(t_2-t_1)A}w ) \right\rvert,
\quad \| z\|\leq r.
\]
For each fixed $w\in X$ with $\| w\|=1$,
let $\varphi_w(t)=\ell_w(e^{-(t_2-t_1)tA}w)$ for $t\in [0,1]$.
Then, by (\ref{2.57}), we have
\begin{eqnarray*}
\left\lvert 1-\ell_w(e^{-(t_2-t_1)A}w)\right\rvert
&=&
\left\lvert \varphi_w(0)-\varphi_w(1)\right\rvert
\leq
\int_0^1\lvert \varphi'_w(t)\rvert dt
\nonumber\\
&= &
(t_2-t_1)\int_0^1 \lvert \ell_w(Ae^{-(t_2-t_1)tA}w)\rvert dt
\nonumber\\
&\leq&
(t_2-t_1)\int_0^1\| A\|e^{-m(A)(t_2-t_1)t}dt
\nonumber\\
&\leq &
\| A\|(t_2-t_1),
\nonumber
\end{eqnarray*}
and thus
\[
\| z-v(z,t_1,t_2)\| \leq M(r)\| A\|(t_2-t_1),
\quad \| z\|\leq r.
\]
Replacing $z$ in the above inequality by $v(z,s,t_1)$
and making use of the semigroup property (\ref{semigroup}),
we obtain for $\| z\|\leq r$, $0\leq s\leq t_1 < t_2<\infty$,
\[
\| v(z,s,t_1)-v(z,s,t_2)\|
=
\| v(z,s,t_1)-v(v(z,s,t_1),t_1,t_2)\|
\leq
M(r)\| A \|(t_2-t_1).
\]
This completes the proof for (i).

Next, by using arguments similar to those in the proof of \cite[Lemma 3.1]{HK04}
and the result in (i),
we obtain
the conclusion (ii),
where
$
\tilde{M}(r)=({1+r})M(r)/({1-r}).
$

Finally, assume that $v(z,s,t)$ is univalent on  $\mathbb{B}$ for all $t\geq s\geq 0$.
Let $r\in (0,1)$ be fixed.
By (i) and the Cauchy integral formula for vector valued holomorphic mappings,
there exists a constant $L(r,A)>0$ such that
\[
\| I-Dv(z,s,t)\|\leq L(r,A)(t-s),
\quad
\| z\|\leq r,
\quad
0\leq  s < t<\infty.
\]
Therefore, if $t-s<c$,
where
$c={1}/({L(r,A)+1})$,
then $Dv(z,s,t)$ has a bounded inverse
for $z\in \mathbb{B}_r$.
Next, if $t-s<2c$, then putting $u=(s+t)/2$,
we have $u-s<c$ and $t-u<c$.
So, $Dv(z,s,u)$  and $Dv(z,u,t)$ have  bounded inverses
for $z\in \mathbb{B}_r$.
Since
$
v(v(z,s,u),u,t)=v(z,s,t)
$,
we obtain that
$Dv(z,s,t)$ has a bounded inverse
for $z\in \mathbb{B}_r$ and $s$ and $t$ as in the above argument.
Repeating this procedure,
we obtain that
$Dv(z,s,t)$ has a bounded inverse
for $z\in \mathbb{B}_r$ for all $0\leq s\leq t<\infty$.
Therefore, $v(z,s,t)$ is locally biholomorphic on $\mathbb{B}_r$ by the inverse mapping theorem.
Since $r\in (0,1)$ is arbitrary and $v(z,s,t)$ is univalent on $\mathbb{B}$ for all $0\leq s\leq t<\infty$,
we obtain that $v(z,s,t)$ is biholomorphic on $\mathbb{B}$ for all $0 \leq s\leq t<\infty$.
\end{proof}

We obtain the following result on the local Lipschitz continuity of
an $A$-normalized subordination chain on the unit ball $\mathbb{B}$ of a complex Banach space
which is bounded on $\mathbb{B}_r\times [0,T]$ for each $r\in(0,1)$ and $T>0$.
For the proof, it suffices to use Proposition \ref{lemma-transition}
and an argument similar to that in the proof of \cite[Proposition 3.2]{HK2022}.
We omit it for the sake of brevity.

\begin{proposition}
\label{proposition-subordination}
Let $X$, $\mathbb{B}$ and $A$ be as in Proposition $\ref{lemma-transition}$.
Let  $f(z,t)$ be an $A$-normalized subordination chain on
$\mathbb{B}\times [0,\infty)$.
Assume that for each $r\in (0,1)$ and $T>0$,
there exists a constant $M(r,T)>0$ such that
\[
\| f(z,t)\|\leq M(r,T),
\quad
\| z\|\leq r,
\quad
t\in[0,T].
\]
Then, for any $r\in(0,1)$ and $T>0$, there exist constants $L(r,T)>0$
and $\tilde{L}(r,T)>0$ such that
\[
\| f(z_1,t_1)-f(z_2,t_2)\|\leq L(r,T)\| z_1-z_2\|+L(r,T)\| A\|(t_2-t_1)
\]
and
\[
\| Df(z_1,t_1)-Df(z_2,t_2)\|\leq \tilde{L}(r,T)\| z_1-z_2\|+\tilde{L}(r,T)\| A\|(t_2-t_1)
\]
for
$
z_1,z_2\in\overline{\mathbb{B}}_r,$
$
0\leq t_1\leq t_2\leq T$.
\end{proposition}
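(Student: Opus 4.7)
The plan is to combine the Cauchy integral formula for Banach-space-valued holomorphic mappings---which converts a bound on $f(\cdot,t)$ into bounds on $Df(\cdot,t)$ and $D^2f(\cdot,t)$ on smaller balls---with the Lipschitz control of the transition mappings $v(z,s,t)$ supplied by Proposition~\ref{lemma-transition}(i). Because $Df(0,t)=e^{tA}$ is invertible, the inverse function theorem makes $f(\cdot,t)$ univalent on a neighbourhood of the origin, so Lemma~\ref{Schwarz-mapping1} applies and produces transition mappings with $f(z,s)=f(v(z,s,t),t)$ and $Dv(0,s,t)=e^{-(t-s)A}$. Fixing $r\in(0,1)$ and $T>0$ and choosing $r<r'<r''<1$, the hypothesis $\|f(z,t)\|\le M(r'',T)$ on $\overline{\mathbb{B}}_{r''}\times[0,T]$ together with the Cauchy estimates yields constants bounding $\|Df(z,t)\|$ on $\overline{\mathbb{B}}_{r'}$ and $\|D^2f(z,t)\|$ on $\overline{\mathbb{B}}_{r}$, uniformly in $t\in[0,T]$.

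For the Lipschitz bound on $f$, the spatial estimate follows from the $Df$-bound via the mean-value inequality on the convex ball $\overline{\mathbb{B}}_r$. For the temporal estimate, use the Schwarz-lemma inequality $\|v(z,t_1,t_2)\|\le\|z\|$ together with the subordination identity to write
\[
\|f(z,t_1)-f(z,t_2)\|=\|f(v(z,t_1,t_2),t_2)-f(z,t_2)\|,
\]
bound this by the mean-value inequality, and apply Proposition~\ref{lemma-transition}(i), which gives $\|v(z,t_1,t_2)-z\|\le M(r)\|A\|(t_2-t_1)$. Taking $L(r,T)$ to be the maximum of the two constants that appear delivers the first estimate of the statement.

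The bound on $Df$ is obtained in parallel. Spatial Lipschitzness is immediate from the $D^2f$-bound and the mean-value inequality. For the temporal part, differentiate the subordination identity to get $Df(z,t_1)=Df(v(z,t_1,t_2),t_2)\,Dv(z,t_1,t_2)$ and split
\[
Df(z,t_1)-Df(z,t_2)=Df(v(z,t_1,t_2),t_2)\bigl(Dv(z,t_1,t_2)-I\bigr)+\bigl(Df(v(z,t_1,t_2),t_2)-Df(z,t_2)\bigr).
\]
The second summand is handled by the spatial Lipschitz bound on $Df$ combined with Proposition~\ref{lemma-transition}(i); for the first, one applies the Cauchy estimate to the holomorphic map $z\mapsto v(z,t_1,t_2)-z$ on $\overline{\mathbb{B}}_{r'}$---whose norm is $O(\|A\|(t_2-t_1))$ by Proposition~\ref{lemma-transition}(i)---to bound $\|Dv(z,t_1,t_2)-I\|$ on $\overline{\mathbb{B}}_{r}$ by a constant times $\|A\|(t_2-t_1)$. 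Assembling gives $\tilde L(r,T)$. There is no genuine conceptual obstacle here; the only technical care required is the bookkeeping of the nested radii $r<r'<r''<1$ needed for two successive applications of the Cauchy estimates, and the uniformity of every intermediate constant in $t\in[0,T]$. The argument mirrors \cite[Proposition~3.2]{HK2022}, with the scalar normalization there replaced by the $A$-normalization whose consequences have already been packaged into Proposition~\ref{lemma-transition}.
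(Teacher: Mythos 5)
Your proposal is correct and follows essentially the route the paper intends: the paper omits the proof, stating only that it follows from Proposition~\ref{lemma-transition} together with the argument of \cite[Proposition~3.2]{HK2022}, and your reconstruction --- Cauchy estimates on nested balls for $Df$ and $D^2f$, the subordination identity $f(z,t_1)=f(v(z,t_1,t_2),t_2)$, and the bound $\|z-v(z,t_1,t_2)\|\leq M(r)\|A\|(t_2-t_1)$ from Proposition~\ref{lemma-transition}(i) --- is exactly that argument. The bookkeeping of radii and the decomposition of $Df(z,t_1)-Df(z,t_2)$ via $Dv(z,t_1,t_2)-I$ are handled correctly.
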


The following theorem gives a positive answer to Question \ref{proplem-Loewner_Range}.

\begin{theorem}
\label{Loewner range}
Let $X$, $\mathbb{B}$ and $A$ be as in Proposition $\ref{lemma-transition}$.
Let  $f(z,t)$ be an $A$-normalized biholomorphic subordination chain on
$\mathbb{B}\times [0,\infty)$
such that
\begin{equation}
\label{A-normlized-bounded-0}
\| e^{-tA}f(z,t)\|\leq M,
\quad
\| z\|\leq \rho,
\quad
t\geq 0.
\end{equation}
for some constants $\rho\in (0,1)$ and $M>0$.
Then $\bigcup_{t\geq 0}f_t(\mathbb{B})=X$.
\end{theorem}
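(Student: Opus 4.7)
The plan is to fix an arbitrary $w\in X$ and exhibit $t\geq 0$, $z\in\mathbb{B}$ with $f(z,t)=w$. Setting $g_t(z):=e^{-tA}f(z,t)$, so that $g_t$ is biholomorphic on $\mathbb{B}$ with $g_t(0)=0$, $Dg_t(0)=I$, and $\|g_t(z)\|\leq M$ on $\overline{\mathbb{B}}_\rho$ by hypothesis $(\ref{A-normlized-bounded-0})$, the task reduces to solving $g_t(z)=e^{-tA}w$. I would split the argument into two independent ingredients: (a) a Koebe-type lower bound $g_t(\mathbb{B})\supseteq\mathbb{B}_\delta$ with $\delta>0$ \emph{independent of $t$}, and (b) the exponential shrinkage $\|e^{-tA}w\|\to 0$ as $t\to\infty$ forced by $m(A)>0$. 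Combining (a) and (b) then yields, for $t$ sufficiently large, a preimage of $e^{-tA}w$ under $g_t$ in $\mathbb{B}$.

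For step (a), I would expand $g_t(z)=z+\sum_{k\geq 2}P_{k,t}(z)$ into homogeneous polynomials. Since for each $u\in X$ with $\|u\|=1$ the slice $\lambda\mapsto g_t(\lambda u)$ is a vector-valued holomorphic map on $\mathbb{D}$ satisfying $\|g_t(\lambda u)\|\leq M$ for $|\lambda|\leq\rho$, the vector-valued Cauchy inequalities give $\sup_{\|u\|=1}\|P_{k,t}(u)\|\leq M/\rho^k$, uniformly in $t$. Summing the geometric series yields the uniform quadratic bound
\[
\|g_t(z)-z\|\leq \frac{M\|z\|^2}{\rho(\rho-\|z\|)},\quad \|z\|<\rho,
\]
and a further Cauchy estimate on $R_t:=g_t-\mathrm{id}$ on a slightly smaller ball produces a uniform Lipschitz bound $\|Dg_t(z)-I\|\leq C\|z\|$ on some $\overline{\mathbb{B}}_{r_0}$ with $r_0$, $C$ depending only on $M$ and $\rho$. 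Choosing $r_0$ small enough that $Cr_0\leq 1/2$, the Banach contraction mapping theorem applied to $T_y(z):=z-g_t(z)+y$ on $\overline{\mathbb{B}}_{r_0}$ produces, for every $y\in\mathbb{B}_\delta$ with $\delta:=r_0/2$, a unique fixed point $z\in\overline{\mathbb{B}}_{r_0}\subset\mathbb{B}$ satisfying $g_t(z)=y$, yielding the claimed uniform inclusion.

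For step (b), Lemma \ref{l1.30} gives $\|e^{-tA}w\|\leq e^{-m(A)t}\|w\|$; since $m(A)>0$, there exists $t_0\geq 0$ with $\|e^{-t_0A}w\|<\delta$, and step (a) then furnishes $z\in\mathbb{B}$ with $g_{t_0}(z)=e^{-t_0A}w$, i.e., $f(z,t_0)=w$. As $w\in X$ was arbitrary, $\bigcup_{t\geq 0}f_t(\mathbb{B})=X$.

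The main obstacle is step (a): extracting from the single local bound $\|g_t(z)\|\leq M$ on $\overline{\mathbb{B}}_\rho$ a \emph{uniform in $t$} lower bound on the images $g_t(\mathbb{B})$. Once this quantitative Koebe-type inclusion is in place, the remainder follows at once from Lemma \ref{l1.30} and the hypothesis $m(A)>0$.
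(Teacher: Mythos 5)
Your proof is correct and follows essentially the same strategy as the paper: expand $e^{-tA}f(z,t)=z+\sum_{k\ge 2}P_k(t)(z^k)$, use the uniform bound \eqref{A-normlized-bounded-0} and the Cauchy estimates to get $\|P_k(t)(z^k)\|\le M\|z\|^k/\rho^k$ uniformly in $t$, deduce a quantitative covering statement, and finish with $\|e^{-tA}w\|\le e^{-m(A)t}\|w\|\to 0$ from Lemma \ref{l1.30}. The one place you genuinely diverge is the covering step: the paper derives the lower bound $\|e^{-tA}f(z,t)\|\ge r/2$ on the sphere $\|z\|=r\le r_1$, translates it into $\|f(z,t)\|\ge \tfrac12 e^{m(A)t}r$, and then invokes the biholomorphicity of $f_t$ together with ``a standard geometric argument'' to conclude $f_t(\mathbb{B}_r)\supseteq\mathbb{B}_{e^{m(A)t}r/2}$; you instead solve $g_t(z)=y$ directly for $\|y\|<\delta$ by the Banach fixed point theorem applied to $T_y(z)=z-g_t(z)+y$. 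Your version is more self-contained in the infinite-dimensional setting (where the sphere is not compact and the ``standard geometric argument'' needs the continuity of $f_t^{-1}$ on $f_t(\mathbb{B})$ to be made rigorous), and it does not actually use the biholomorphicity hypothesis at all — the contraction produces a preimage for any normalized holomorphic $f_t$ satisfying \eqref{A-normlized-bounded-0}. The paper's version buys the slightly stronger explicit inclusion $f_t(\mathbb{B}_r)\supseteq\mathbb{B}_{e^{m(A)t}r/2}$, but for the stated conclusion the two are interchangeable.
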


\begin{proof}
We use an argument similar to that in the proof of \cite[Theorem 3.1]{DGHK}.
Let
$$e^{-tA}f(z,t)=z+\sum_{k=2}^\infty P_k(t)(z^k),\quad z\in \mathbb{B},$$
be the homogeneous polynomial expansion of $e^{-tA}f(z,t)$, where
$$P_k(t)(z^k)=e^{-tA}\frac{1}{k!}D^kf(0,t)(\underbrace{z,\dots,z}_{k-times}).$$
Inequality (\ref{A-normlized-bounded-0}) and the Cauchy integral formula imply that
$$\|P_k(t)(z^k)\|\leq \frac{M}{\rho^k}\|z\|^k,\quad z\in \mathbb{B}_\rho,\quad t\geq 0,$$
and thus
\[
\|e^{-tA}f(z,t)-z\|\leq \|z\|\sum_{k=2}^\infty\frac{M}{\rho^k}\|z\|^{k-1},
\quad z\in \mathbb{B}_{\rho},\quad t\geq 0.
\]
Then there exists $r_1\in (0,\rho)$ such that
$$\|e^{-tA}f(z,t)-z\|\leq \frac{1}{2}\|z\|,\quad \|z\|\leq r_1,\quad t\geq 0,$$
which implies that
\begin{equation}
\label{2.51}
\|e^{-tA}f(z,t)\|\geq \frac{1}{2}r,\quad \|z\|=r\leq r_1,\quad t\geq 0.
\end{equation}
Combining inequalities (\ref{2.57}) and (\ref{2.51}), we deduce that

$$e^{-m(A)t}\|f(z,t)\|\geq \|e^{-tA}f(z,t)\|\geq \frac{1}{2}r,\quad
\|z\|=r\leq r_1,\quad t\geq 0,$$
and thus
$$\|f(z,t)\|\geq \frac{1}{2}e^{m(A)t}r,\quad \|z\|=r\leq r_1,\quad t\geq 0.$$
Since $f(\cdot, t)$ is biholomorphic on $\mathbb{B}$ for each $t\geq 0$,
by a standard geometric argument,
we obtain that $f_t(\mathbb{B}_r)$ contains the ball $\mathbb{B}_{r(t)}$,
where $r(t)=e^{m(A)t}r/2$.
Hence $\bigcup_{t\geq 0}f_t(\mathbb{B})=X$, as desired.
\end{proof}

\section{Schwarz mappings and A-normalized subordination chains on the unit ball of a reflexive complex Banach space}
\label{Schwarz}

All along this section, we assume that $X$ is a reflexive complex Banach space.

\begin{proposition}
\label{prop-Schwarz}
Let $\mathbb{B}$ be the unit ball of a reflexive complex Banach space $X$
and let
$A\in L(X)$ be such that $m(A)>0$.
Let $v(z,s,t)$ be a family of Schwarz mappings on $\mathbb{B}$
which satisfy the semigroup property
$(\ref{semigroup})$
with
$v(z,s,t)\neq 0$ for $z\in \mathbb{B}\setminus \{ 0\}$, $0\leq s\leq t<\infty$
and
$Dv(0,s,t)=e^{-(t-s)A}$ for $t\geq s\geq 0$.
Then
\begin{equation}
\label{5.100a}
\frac{\|v(z,s,t)\|}{(1-\|v(z,s,t)\|)^2}\leq
e^{-m(A)(t-s)}\frac{\|z\|}{(1-\|z\|)^2},\quad z\in \mathbb{B},\quad t\geq s\geq 0,
\end{equation}
and
\begin{equation}
\label{5.101a}
e^{-k(A)(t-s)}\frac{\|z\|}{(1+\|z\|)^2}\leq
\frac{\|v(z,s,t)\|}{(1+\|v(z,s,t)\|)^2},\quad z\in \mathbb{B},\quad t\geq s\geq 0.
\end{equation}
\end{proposition}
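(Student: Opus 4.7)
The plan is to convert the two desired estimates into a two-sided differential inequality for $\rho(t):=\|v(z,s,t)\|$ via the semigroup property and then integrate. Specifically, I aim to establish
\[
-k(A)\,\frac{\rho(1+\rho)}{1-\rho}\ \leq\ \rho'(t)\ \leq\ -m(A)\,\frac{\rho(1-\rho)}{1+\rho}\qquad\text{for a.e. }t\geq s.
\]
Setting $g_+(x):=\log x-2\log(1-x)$ and $g_-(x):=\log x-2\log(1+x)$, the identities $g_+'(x)=(1+x)/[x(1-x)]$ and $g_-'(x)=(1-x)/[x(1+x)]$ convert these bounds to $(g_+\circ\rho)'\leq -m(A)$ and $(g_-\circ\rho)'\geq -k(A)$ a.e.; integrating from $s$ to $t$ and exponentiating then immediately produces $(\ref{5.100a})$ and $(\ref{5.101a})$. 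The chain rule for AC functions applies because $\rho$ is locally Lipschitz by Proposition \ref{lemma-transition}(i), and the hypothesis $v(z,s,t)\neq 0$ keeps $\rho(t)\in(0,\|z\|]\subset(0,1)$ throughout, so that $g_\pm$ are smooth on the range of $\rho$.

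To obtain the two Dini-derivative bounds I will use the semigroup identity $v(z,s,t+h)=v(w,t,t+h)$ with $w:=v(z,s,t)$, which reduces each direction to an asymptotic estimate as $h\to 0^+$ on $\|F_h(w)\|$, where $F_h:=v(\cdot,t,t+h)$ is a Schwarz mapping with $DF_h(0)=e^{-hA}$. For the upper bound I slice along the complex line through $w$ and contract using a functional supporting the image: taking $\ell_{F_h(w)}\in T(F_h(w))$ (non-empty since $F_h(w)=v(z,s,t+h)\neq 0$), the disc function $\phi_1(\zeta):=\ell_{F_h(w)}(F_h(\zeta w/\|w\|))$ satisfies $\phi_1:\mathbb{D}\to\overline{\mathbb{D}}$, $\phi_1(0)=0$, $\phi_1(\|w\|)=\|F_h(w)\|$, and $|\phi_1'(0)|\leq e^{-hm(A)}$ by Lemma \ref{l1.30}. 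The classical Schwarz--Pick inequality applied to $\phi_1(\zeta)/\zeta$ yields $|\phi_1(\zeta)|\leq |\zeta|(|\phi_1'(0)|+|\zeta|)/(1+|\phi_1'(0)||\zeta|)$, which on Taylor expansion in $h$ produces
\[
\|F_h(w)\|\leq\|w\|-hm(A)\,\frac{\|w\|(1-\|w\|)}{1+\|w\|}+O(h^2).
\]

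For the lower bound I use a different supporting functional: pick $\ell\in T(w)$, so $\ell(w/\|w\|)=1$, and form $\phi_2(\zeta):=\ell(F_h(\zeta w/\|w\|))$. Then $\phi_2'(0)=1-h\,\ell(A(w/\|w\|))+O(h^2)$, and since $\Re\,\ell(A(w/\|w\|))\leq k(A)$ by definition of $k(A)$, one has $|\phi_2'(0)|\geq\Re\,\phi_2'(0)\geq 1-hk(A)+O(h^2)$. The corresponding lower Schwarz--Pick estimate $|\phi_2(\zeta)|\geq|\zeta|(|\phi_2'(0)|-|\zeta|)/(1-|\phi_2'(0)||\zeta|)$ (valid once $|\phi_2'(0)|>|\zeta|$, which happens for small $h$) combined with $\|F_h(w)\|\geq|\phi_2(\|w\|)|$ then yields $\|F_h(w)\|\geq\|w\|-hk(A)\|w\|(1+\|w\|)/(1-\|w\|)+O(h^2)$. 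Dividing each asymptotic by $h$ and sending $h\to 0^+$ delivers the target Dini-derivative bounds, which by absolute continuity of $\rho$ become a.e. bounds on $\rho'(t)$.

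The main technical obstacle I anticipate is keeping the $O(h^2)$ remainders in both Schwarz--Pick expansions uniform in $t$ on compact subintervals $[s,T]$, so that the pointwise Dini bounds upgrade to a.e. bounds usable for integration. Uniformity follows once one notes that the Schwarz--Pick constants depend only on $r=\rho(t)$ and $h$, while $\rho(t)\leq\|z\|$ stays in a compact subset of $(0,1)$. The conceptual subtlety is the \emph{asymmetric} choice of supporting functional---$\ell_{F_h(w)}\in T(F_h(w))$ for the upper estimate but $\ell\in T(w)$ for the lower---which is exactly what causes the two different spectral quantities $m(A)$ and $k(A)$ to appear in $(\ref{5.100a})$ and $(\ref{5.101a})$ respectively.
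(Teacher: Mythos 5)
Your proposal is correct, but it follows a genuinely different route from the paper. The paper differentiates the \emph{vector-valued} trajectory $t\mapsto v(z,s,t)$ almost everywhere (this is where reflexivity of $X$ is used), invokes Kato's lemma to write $\frac{d}{dt}\|v(t)\|=\Re[\ell_{v(t)}(\frac{dv}{dt}(t))]$, extracts a weak$^*$-convergent sequence of supporting functionals $\ell_{v(\tau_n)}$, and then applies Gurganus's lower estimate for the Carath\'eodory-type class $\mathcal{N}(\mathbb{B})$ to $h_{\tau_n,t}(z)=z-v(z,\tau_n,t)$ to obtain the one-sided differential inequality $\frac{d}{dt}\|v(t)\|\leq -m(A)\|v(t)\|\frac{1-\|v(t)\|}{1+\|v(t)\|}$ (and its companion for $k(A)$), which it then integrates exactly as you do. You instead work only with the scalar function $\rho(t)=\|v(z,s,t)\|$ and its Dini derivatives, reducing each bound to a one-variable Schwarz--Pick estimate on a disc slice, with the asymmetric choice of supporting functional ($\ell_{F_h(w)}\in T(F_h(w))$ versus $\ell\in T(w)$) correctly producing $m(A)$ and $k(A)$. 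This buys something real: a real-valued Lipschitz function is a.e.\ differentiable with no hypothesis on $X$, so your argument does not use reflexivity at all and would prove the proposition on the unit ball of an arbitrary complex Banach space; it also bypasses Kato's lemma, the weak$^*$ compactness step, and the $\mathcal{N}(\mathbb{B})$ machinery. Your instinct to pass through the infinitesimal inequality rather than a single application of Schwarz--Pick is essential here, since the one-step bound $\|F_h(w)\|\leq\|w\|\frac{a+\|w\|}{1+a\|w\|}$ with $a=e^{-hm(A)}$ does \emph{not} by itself imply $\frac{\|F_h(w)\|}{(1-\|F_h(w)\|)^2}\leq a\frac{\|w\|}{(1-\|w\|)^2}$ (the discrepancy is exactly $(a-1)^2=O(h^2)$, which only disappears in the limit). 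Two small points to tighten: the set $(0,\|z\|]$ is not compact and $g_+$ blows up at $0$, so you should say that on each compact interval $[s,T]$ the continuous positive function $\rho$ is bounded below by a positive constant, whence $g_\pm\circ\rho$ is Lipschitz there and the a.e.\ chain rule plus the fundamental theorem of calculus apply; and the uniformity of the $O(h^2)$ remainders in $t$ is not actually needed, since it suffices to take the pointwise limit $h\to 0^+$ at each of the a.e.\ points $t$ where $\rho'(t)$ exists.
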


\begin{proof}
We use an argument similar to that in the proof of \cite[Proposition 5.1]{HK2022}.
Let $s\geq 0$ and $z\in \mathbb{B}\setminus \{ 0\}$ be fixed.
In view of Proposition \ref{lemma-transition},
we obtain that
$v(z,s,\cdot )$ and
$\| v(z,s,\cdot )\|$ are Lipschitz continuous on $[s,\infty )$.
The assumption that $X$ is reflexive implies
that $v(z,s,t)$ and $\| v(z,s,t)\|$ are strongly differentiable for almost all
$t\in [s,\infty)$.
Also, in view of  \cite[Lemma 1.3]{Ka}, we obtain that
$$\frac{\partial}{\partial t}\|v(z,s,t)\|=
\Re \left[\ell_{v(z,s,t)}\left(\frac{\partial}{\partial t}v(z,s,t)\right)\right]
\mbox{ a.e. } t\geq s,$$
where $\ell_{v(z,s,t)}\in T(v(z,s,t))$.

Let $v(t)=v(z,s,t)$ for $t>s$.
Also let $t\in(s,\infty)$ be fixed such that $v(z,s,t)$
and $\| v(z,s,t)\|$ are strongly differentiable at $t$.
Since $\| \ell_{v(\tau)}\|=1$ for any $\tau\in (s,\infty)$, there exist a sequence
$\{\tau_n\}\subset (s, t)$ and $f\in L(X, \mathbb{C})$
with $\| f\|\leq 1$
such that $\tau_n\to t$ and that $\ell_{v(\tau_n)}(x)\to f(x)$ for any $x\in X$
by weak$^*$ compactness of the closed unit ball in $X^*$.
Since $f(v(t))=\ell _{v(t)}(v(t))=\| v(t)\|$, we have
$f\in T(v(t))$.
Let $f=\ell_{v(t)}$.
By a similar convergence argument as above, we have
\begin{equation}
\label{5.3}
\Re \left[l_{v(\tau_n)}\left(\frac{v(\tau_n)-v(t)}{\tau_n-t}
\right)\right]\to \Re \left[l_{v(t)}
\left(\frac{dv}{dt}(t)\right)\right]=\frac{d}{dt}\|v(t)\|\
\mbox{ as } n\to\infty.
\end{equation}

Next, 
let $h_{\tau_n,t}:\mathbb{B}\to X$ be given by
$h_{\tau_n,t}(z)={z-v(z,\tau_n,t)}$, $z\in \mathbb{B}.$
Then $h_{\tau_n,t}\in \mathcal{N}(\mathbb{B})$, and in view of
Lemma \ref{l1.30} and \cite[Lemma 3]{Gu},
we deduce that
\[
\frac{1-e^{-m(A)(t-\tau_n)}}{t-\tau_n}\|z\|\frac{1-\|z\|}{1+\|z\|}
\leq
\Re \left[\frac{\ell_z(z-v(z,\tau_n,t))}{t-\tau_n}\right].
\]
By replacing $z$ by $v(\tau_n)$ and making use of the semigroup property
$(\ref{semigroup})$,
we have
\[
{\frac{1-e^{-m(A)(t-\tau_n)}}{t-\tau_n}
\|v(\tau_n)\|\frac{1-\|v(\tau_n)\|}{1+\|v(\tau_n)\|}}
\leq
\Re \left[\frac{\ell_{v(\tau_n)}(v(\tau_n)-v(t))}{t-\tau_n}\right],
\]
which combined with (\ref{5.3}) gives that
$$m(A) \leq -\frac{1+\|v(t)\|}{1-\|v(t)\|}\cdot\frac{1}{\|v(t)\|}\frac{d}{dt}\|v(t)\|,
\quad \mbox{a.e. } t\geq s.$$
Since
$\|v(\tau)\|$ is locally Lipschitz continuous,
by integrating both sides of the above inequality and making
a change of variable, we obtain
$$-\int_{\|z\|}^{\|v(t)\|}\frac{1+x}{x(1-x)}dx=
-\int_s^t \frac{1+\|v(\tau)\|}{1-\|v(\tau)\|}\cdot
\frac{1}{\|v(\tau)\|}\cdot \frac{d\|v(\tau)\|}{d\tau}d\tau$$
$$\geq \int_s^t m(A)d\tau=m(A)(t-s),$$
which implies the inequality (\ref{5.100a}), as desired.
The proof for  (\ref{5.101a}) is similar.
\end{proof}

We give sufficient conditions for an $A$-normalized subordination chain on the unit ball of a reflexive complex Banach space $X$ to be a biholomorphic subordination chain
and to have $X$ as its Loewner range.

\begin{theorem}
\label{proposition-subordination-biholom}
Let $X$, $\mathbb{B}$ and $A$ be as in Proposition $\ref{prop-Schwarz}$.
Let  $f(z,t)$ be an $A$-normalized subordination chain on
$\mathbb{B}\times [0,\infty)$
such that
\begin{equation}
\label{A-normlized-bounded}
\| e^{-tA}f(z,t)\|\leq M,
\quad
\| z\|\leq \rho,
\quad
t\geq 0.
\end{equation}
for some constants $\rho\in (0,1)$ and $M>0$.
If one of the following conditions is satisfied,
then $f(z,t)$ is biholomorphic on  $\mathbb{B}$ for all $t\geq 0$
and $\bigcup_{t\geq 0}f_t(\mathbb{B})=X$.
\begin{enumerate}
\item[(i)]
$f(z,t)$ is univalent on  $\mathbb{B}$ for all $t\geq 0$;
\item[(ii)]
$v(\cdot,s,t)$ is univalent on $\mathbb{B}$ for each $s$, $t$
with $0\leq s\leq t<\infty$,
where $v(z,s,t)$ is the transition mapping associated with $f(z,t)$.
\end{enumerate}
\end{theorem}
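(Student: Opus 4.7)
The plan is to reduce both cases to a common setup in which the transition mapping $v(z,s,t)$ is biholomorphic on $\mathbb{B}$, and then combine the Schwarz-type growth estimate from Proposition \ref{prop-Schwarz} with a uniform-in-$t$ Cauchy estimate near the origin to promote local biholomorphy of $f(\cdot,t)$ to global biholomorphy.

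First, I would check that (i) implies (ii). Since $Df(0,t)=e^{tA}$ is invertible, each $f(\cdot,t)$ is locally biholomorphic at the origin, so Lemma \ref{Schwarz-mapping1} supplies transition mappings $v(z,s,t)$ satisfying the semigroup property $(\ref{semigroup})$; differentiating $f(z,s)=f(v(z,s,t),t)$ at $z=0$ gives $Dv(0,s,t)=e^{-(t-s)A}$, and Lemma \ref{transition-univalent} yields univalence of each $v(\cdot,s,t)$. In either case Proposition \ref{lemma-transition} upgrades $v$ to a biholomorphic self-map of $\mathbb{B}$. Since each $v(\cdot,s,t)$ is Schwarz and univalent, $v(z,s,t)\neq 0$ for $z\neq 0$, so Proposition \ref{prop-Schwarz} applies and $m(A)>0$ forces $\|v(z,s,t)\|\to 0$ as $t\to\infty$ for each fixed $z\in\mathbb{B}$ and $s\geq 0$.

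The second step is to exploit the boundedness hypothesis (\ref{A-normlized-bounded}) to obtain uniform local invertibility and injectivity of $f(\cdot,t)$ near the origin. Writing $g(z,t)=e^{-tA}f(z,t)$, the family $\{g(\cdot,t)\}_{t\geq 0}$ is bounded by $M$ on $\overline{\mathbb{B}}_\rho$ with $Dg(0,t)=I$, so Cauchy estimates on the homogeneous polynomial expansion of $g$ give
\[
\|Dg(z,t)-I\|\leq \frac{C\|z\|}{(1-\|z\|/\rho)^{2}}\quad\text{on }\mathbb{B}_\rho,
\]
with $C$ independent of $t$. Choosing $r_0>0$ small enough that the right side is at most $1/2$ on $\overline{\mathbb{B}}_{r_0}$, the mean value inequality forces $\|g(z_1,t)-g(z_2,t)\|\geq \tfrac12\|z_1-z_2\|$ on $\mathbb{B}_{r_0}$, so $g(\cdot,t)$, and hence $f(\cdot,t)=e^{tA}g(\cdot,t)$, is injective on $\mathbb{B}_{r_0}$ with $Df(z,t)$ boundedly invertible there, uniformly in $t\geq 0$.

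I would then differentiate $f(z,s)=f(v(z,s,t),t)$ in $z$ to obtain $Df(z,s)=Df(v(z,s,t),t)\,Dv(z,s,t)$. For any fixed $z\in\mathbb{B}$ and $s\geq 0$, the growth estimate lets me pick $t$ so large that $\|v(z,s,t)\|\leq r_0$; then both factors have bounded inverses (the first by the preceding step, the second since $v$ is biholomorphic), whence $Df(z,s)$ has bounded inverse throughout $\mathbb{B}$. For injectivity in case (ii), if $f(z_1,s)=f(z_2,s)$ with $z_1\neq z_2$, then $f(v(z_1,s,t),t)=f(v(z_2,s,t),t)$ with the two arguments distinct by injectivity of $v(\cdot,s,t)$; choosing $t$ large enough that both lie in $\mathbb{B}_{r_0}$ contradicts the uniform local injectivity of $f(\cdot,t)$. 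Injectivity combined with everywhere-invertible derivative (via the inverse mapping theorem) gives biholomorphy of $f(\cdot,s)$ on $\mathbb{B}$, and the statement $\bigcup_{t\geq 0}f_t(\mathbb{B})=X$ follows immediately from Theorem \ref{Loewner range}, whose hypotheses are now satisfied. The main obstacle I anticipate is extracting the $t$-uniform radius $r_0$ on which $f(\cdot,t)$ is simultaneously injective and has boundedly invertible derivative; without this uniformity, the pullback through $v(z,s,t)$ fails to deliver control at a fixed $z$, and it is precisely here that the boundedness hypothesis (\ref{A-normlized-bounded}) and, through Proposition \ref{prop-Schwarz}, the reflexivity of $X$ enter decisively.
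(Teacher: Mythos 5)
Your proposal is correct and follows essentially the same architecture as the paper's proof: a uniform-in-$t$ Cauchy estimate from (\ref{A-normlized-bounded}) giving $\|e^{-tA}Df(z,t)-I\|$ small on a fixed ball $\mathbb{B}_{r_0}$, the decay of the transition mapping from Proposition \ref{prop-Schwarz}, biholomorphy of $v$ from Proposition \ref{lemma-transition}, the pullback through $f(z,s)=f(v(z,s,t),t)$, and Theorem \ref{Loewner range} for the range statement. Two local choices differ. First, you reverse the reduction: the paper proves (i) directly (local biholomorphy near $0$ plus the assumed global univalence of $f$) and then reduces (ii) to (i) by establishing univalence of $q_t=e^{-tA}f(\cdot,t)$ on $\mathbb{B}_{r_0}$ via Suffridge's criterion $\Re[\ell_u(Dq_t(z)(u))]>0$; you instead observe that (i) implies (ii) by Lemma \ref{transition-univalent} and treat (ii) as the primary case. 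Second, in place of Suffridge's theorem you get injectivity of $g(\cdot,t)=e^{-tA}f(\cdot,t)$ on the convex set $\mathbb{B}_{r_0}$ by the elementary mean-value bound $\|g(z_1,t)-g(z_2,t)\|\geq\tfrac12\|z_1-z_2\|$, which is a self-contained and arguably cleaner substitute. The only cosmetic imprecision is the exact form of your Cauchy estimate $C\|z\|/(1-\|z\|/\rho)^2$; what is actually needed (and what the paper derives via a uniform bound on $e^{-tA}D^2f$) is just a $t$-uniform $O(\|z\|)$ bound near the origin, which your argument uses correctly.
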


\begin{proof}
(i)
In view of (\ref{A-normlized-bounded}), the Cauchy integral formula
and \cite[Theorem 1.10]{Mu86},
we deduce that there exist $\rho_1\in (0,\rho)$ and
$L=L(\rho_1,M)>0$ such that
\begin{equation}
\label{D2}
\|e^{-tA}D^2f(z,t)(u,v)\|\leq L,\quad \|z\|<\rho_1,\quad t\geq 0,\quad \|u\|=\|v\|=1.
\end{equation}
Let $u\in X$ with $\|u\|=1$ be fixed, and let $z\in \mathbb{B}_{\rho_1/2}$. Let
$p_u(\zeta)=e^{-tA}Df(\zeta z,t)(u)$,
$\lvert \zeta \rvert <2$.
Then $p_u$ is a
holomorphic mapping from the disc $\mathbb{D}(0,2)$ into $X$.
The relations
$$
\|p_u(1)-p_u(0)\|
=
\left\|
\int_0^1\frac{d}{d\zeta}p_u(\zeta)d\zeta
\right\|
\leq
\sup_{\lvert \zeta \rvert \leq 1}\Big\|\frac{d}{d\zeta}p_u(\zeta)\Big\|
$$
and $Df(0,t)=e^{tA}$, combined with (\ref{D2}),
show that
$$\|e^{-tA}Df(z,t)(u)-u\|\leq \sup_{\lvert \zeta \rvert \leq 1}\|e^{-tA}D^2f(\zeta z,t)(z,u)\|
\leq L\|z\|,\quad z\in \mathbb{B}_{\rho_1/2},\, t\geq 0.$$
Let $r_0=\min\{\frac{\rho_1}{2},\frac{1}{L+1}\}$.
Since $u\in X$ with $\| u\|=1$ is arbitrary,
we obtain that
\begin{equation}
\label{bounded-inverse}
\|e^{-tA}Df(z,t)-I\|\leq Lr_0<1,\quad \|z\|<r_0,\quad t\geq 0.
\end{equation}
Therefore, $Df(z,t)$ has a bounded inverse on
$\mathbb{B}_{r_0}$ for $t\geq 0$.
By the inverse mapping theorem,
$f(z,t)$ is locally biholomorphic  on
$\mathbb{B}_{r_0}$ for $t\geq 0$.
Since $f(z,t)$ is univalent on  $\mathbb{B}$ for all $t\geq 0$,
$f(z,t)$ is biholomorphic  on
$\mathbb{B}_{r_0}$ for $t\geq 0$.

Let $r\in (0,1)$ be fixed.
Let $v(z,s,t)$ be the transition mapping associated with $f(z,t)$.
Taking into account of Lemmas \ref{Schwarz-mapping1}, \ref{transition-univalent},
Proposition \ref{prop-Schwarz} and the univalence of $f(z,t)$ on $\mathbb{B}$ for all $t\geq 0$,
we deduce that $v(z,s,t)$ is univalent on $\mathbb{B}$ for $t\geq s\geq 0$
and
$$\|v(z,s,t)\|\leq e^{-m(A)(t-s)}\frac{r}{(1-r)^2},\quad \|z\|\leq r,\quad
t\geq s\geq 0,$$
and hence there exists $t_0\geq s\geq 0$ such that
$\|v_{s,t}(z)\|\leq r_0$ for $t\geq t_0$ and $\|z\|\leq r$.
By Proposition \ref{lemma-transition},
$v(z,s,t)$ is biholomorphic on $\mathbb{B}$ for all $t\geq s\geq 0$.
Since
$$f(z,s)=f(v_{s,t}(z),t),\quad z\in \mathbb{B},\quad t\geq s\geq 0,$$
we deduce that $f(\cdot,s)$ biholomorphic on $\mathbb{B}_r$ for $s\geq 0$.
Now, since $r\in (0,1)$
is arbitrary, we conclude that $f(\cdot,s)$ is biholomorphic on $\mathbb{B}$.
By Theorem \ref{Loewner range},
we have
$\bigcup_{t\geq 0}f_t(\mathbb{B})=X$.
This completes the proof for (i).

(ii)
It suffices to show that $f(z,t)$ is a univalent
subordination chain.
Let
$
q_t(z)=e^{-tA}f(z,t),
$
$z\in \mathbb{B}$, $t\geq 0$.
Since  the inequality (\ref{bounded-inverse})  holds also in the case (ii),
we have
\[
\Re[\ell_u(Dq_t(z)(u))]>0,\quad \|z\|<r_0,\, t\geq 0,\,
u\in X,\, \|u\|=1,\,\ell_u\in T(u).
\]
In view of
\cite[Theorem 7]{Su77},
the mapping $q_t$ is univalent on $\mathbb{B}_{r_0}$ for all $t\geq 0$.
Let $r\in (0,1)$ be fixed.
Taking into account
the relation (\ref{5.100a}), we deduce that
$$\|v_{s,t}(z)\|\leq e^{-m(A)(t-s)}\frac{r}{(1-r)^2},\quad \|z\|\leq r,\quad
t\geq s\geq 0.$$
Therefore, there exists $t_0\geq s\geq 0$ such that
$\|v_{s,t}(z)\|\leq r_0$ for $t\geq t_0$ and $\|z\|\leq r$.
Hence, univalence of $v_{s,t}$ on $\mathbb{B}$
and $f(\cdot,t)$ on $\mathbb{B}_{r_0}$
imply
that $f(v_{s,t}(\cdot),t)$ is univalent on $\mathbb{B}_r$ for $t\geq t_0$.
Using the relation
$f(z,s)=f(v_{s,t}(z),t)$ for $ z\in \mathbb{B}$, $t\geq s\geq 0,$
we deduce that $f(\cdot,s)$ is univalent on $\mathbb{B}_r$ for $s\geq 0$.
Since $r\in (0,1)$
is arbitrary, we conclude that $f(\cdot,s)$ is univalent on $\mathbb{B}$,
as desired.
This completes the proof for (ii).
\end{proof}

By using Proposition \ref{prop-Schwarz},
we obtain the following result
(cf.
\cite[Proposition 3.12]{GHKK13},
\cite[Theorem 3.5]{HK04}).
For the proof, it suffices to use
arguments
similar to those in the proofs of
\cite[Theorem 3.10]{GHKK13}
and
\cite[Theorem 3.5]{HK04}.
We omit it.

\begin{theorem}
\label{subordination-reflexive}
Let $X$ and $\mathbb{B}$ be as in Proposition $\ref{prop-Schwarz}$
and let
$A\in L(X)$ with $k_+(A)<2m(A)$.
Let  $f(z,t)$ be an $A$-normalized (not necessarily univalent) subordination chain.
Let $v(z,s,t)$ be the transition mapping associated with $f(z,t)$.
Assume that
$v(z,s,t)\neq 0$ for $z\in \mathbb{B}\setminus \{ 0\}$, $0\leq s\leq t<\infty$,
and there exist constants $\rho\in (0,1)$ and $M>0$ such that
\[
\| e^{-tA}f(z,t)\|\leq M,
\quad
\| z\|\leq \rho,
\quad
t\geq 0.
\]
Then
\[
\lim_{t\to\infty}e^{tA}v(z,s,t)=f(z,s)
\]
uniformly on $\overline{\mathbb{B}}_r$, $r\in (0,1)$.
\end{theorem}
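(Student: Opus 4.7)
The plan is to show $e^{tA}v(z,s,t)\to f(z,s)$ uniformly on $\overline{\mathbb{B}}_r$ for each $r\in (0,1)$, by bounding the difference via the transition relation. Using Lemma \ref{Schwarz-mapping1}, $f(z,s)=f(v(z,s,t),t)$, so I would factor
$$
e^{tA}v(z,s,t)-f(z,s)=e^{tA}\bigl[v(z,s,t)-e^{-tA}f(v(z,s,t),t)\bigr],
$$
and then estimate the inner bracket and the outer factor separately.

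For the inner bracket, I would exploit that the normalization $Df(0,t)=e^{tA}$ gives $D(e^{-tA}f(\cdot,t))(0)=I$, so the homogeneous polynomial expansion has the form
$$
e^{-tA}f(w,t)=w+\sum_{k=2}^{\infty}P_k(t)(w^k),\qquad w\in\mathbb{B}.
$$
The hypothesis $\|e^{-tA}f(z,t)\|\leq M$ for $\|z\|\leq\rho$, combined with the Cauchy integral formula, yields $\|P_k(t)(w^k)\|\leq M\rho^{-k}\|w\|^k$. Summing the geometric tail on the region $\|w\|\leq \rho/2$ gives a constant $C_1=C_1(M,\rho)>0$ such that
$$
\bigl\|e^{-tA}f(w,t)-w\bigr\|\leq C_1\|w\|^{2},\qquad \|w\|\leq\rho/2,\ t\geq 0.
$$

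For the decay of $v(z,s,t)$, I would apply Proposition \ref{prop-Schwarz}: the growth estimate $\|v\|/(1-\|v\|)^2 \leq e^{-m(A)(t-s)}\|z\|/(1-\|z\|)^2$ yields, for $\|z\|\leq r$, a constant $c_r>0$ with $\|v(z,s,t)\|\leq c_r e^{-m(A)(t-s)}$; in particular $\|v(z,s,t)\|\leq\rho/2$ uniformly on $\overline{\mathbb{B}}_r$ once $t-s$ is sufficiently large. Substituting $w=v(z,s,t)$ into the previous display gives
$$
\bigl\|v(z,s,t)-e^{-tA}f(v(z,s,t),t)\bigr\|\leq C_1 c_r^{2}\,e^{-2m(A)(t-s)}.
$$

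The last ingredient is the growth bound for the semigroup $\{e^{tA}\}$: since $A\in L(X)$ is bounded, the spectral mapping theorem and the spectral radius formula give $\lim_{t\to\infty}t^{-1}\log\|e^{tA}\|=k_+(A)$, so for any $\varepsilon>0$ there exists $C_\varepsilon>0$ with $\|e^{tA}\|\leq C_\varepsilon e^{(k_+(A)+\varepsilon)t}$ for all $t\geq 0$. The standing assumption $k_+(A)<2m(A)$ permits choosing $\varepsilon>0$ so that $k_+(A)+\varepsilon-2m(A)<0$, and then
$$
\bigl\|e^{tA}v(z,s,t)-f(z,s)\bigr\|\leq C_\varepsilon C_1 c_r^{2}\,e^{2m(A)s}\,e^{(k_+(A)+\varepsilon-2m(A))t}\longrightarrow 0,
$$
uniformly on $\overline{\mathbb{B}}_r$. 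The main delicate point, and the precise place where the hypothesis $k_+(A)<2m(A)$ is consumed, is the quantitative balancing of the quadratic decay $\|v(z,s,t)\|^{2}\leq c_r^{2}e^{-2m(A)(t-s)}$ against the worst-case exponential growth of $\|e^{tA}\|$; the strict inequality $k_+(A)<2m(A)$ is exactly what is required to produce a negative exponent after this cancellation. Reflexivity enters only through Proposition \ref{prop-Schwarz}, which underpins the exponential decay of $v(z,s,t)$ in this infinite-dimensional setting.
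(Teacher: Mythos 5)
Your proof is correct and is essentially the argument the paper has in mind: the paper omits the proof, referring to \cite[Theorem 3.10]{GHKK13} and \cite[Theorem 3.5]{HK04}, and those proofs use exactly your decomposition $e^{tA}v-f(\cdot,s)=e^{tA}\bigl[v-e^{-tA}f(v,t)\bigr]$ together with the Cauchy-estimate bound $\|e^{-tA}f(w,t)-w\|=O(\|w\|^2)$, the decay $\|v(z,s,t)\|=O(e^{-m(A)(t-s)})$ from Proposition \ref{prop-Schwarz}, and the spectral bound $\|e^{tA}\|\leq C_\varepsilon e^{(k_+(A)+\varepsilon)t}$, with $k_+(A)<2m(A)$ closing the estimate. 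The only detail left implicit is the routine verification that $Dv(0,s,t)=e^{-(t-s)A}$ (differentiate $f(z,s)=f(v(z,s,t),t)$ at $z=0$), which is needed before Proposition \ref{prop-Schwarz} applies.
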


In \cite[Theorem 3.1]{GHKK13},
it is proved that the unique solution $v(\cdot, s,t)$ to the initial value problem
(\ref{initial-vp2-reflexive}) is univalent on $\mathbb{B}$.
By using Proposition \ref{lemma-transition}, we obtain that
$v(\cdot, s,t)$ is a biholomorphic mapping as follows.

\begin{proposition}
\label{evolution-reflexive}
Let $X$, $\mathbb{B}$ and $A$ be as in Proposition $\ref{prop-Schwarz}$.
Let $h:\mathbb{B}\times [0,\infty)\to X$ be a Herglotz vector field such that $Dh(0,t)=A$
for $t\geq 0$.
Then for each $z\in\mathbb{B}$, the initial value problem
\begin{equation}
\label{initial-vp2-reflexive}
\frac{\partial v}{\partial t}(z,s, t)=-h(v(z,s,t),t),\quad
\quad t\in [s,\infty)\setminus E_{s,z},\quad v(z,s,s)=z,
\end{equation}
where $E_{s,z}\subset [s,\infty)$ is a null set which depends only on $s$ and $z$,
has a unique solution $v=v(z,s,t)$ on $[s,\infty)$,
such that $v(\cdot,s, t)$ is Lipschitz continuous in $t\in[s,\infty)$
uniformly with respect to $z\in \overline{\mathbb{B}}_r$, $r\in (0,1)$, $v(\cdot,s, t)$ is a biholomorphic Schwarz
mapping which satisfy the semigroup property
$(\ref{semigroup})$
and
$
Dv(0,s,t)=e^{-(t-s)A}$,
$
t\geq s$.
In addition, $v(z,s,t)$ satisfies the following estimates
\[
\frac{\|v(z,s,t)\|}{(1-\|v(z,s,t)\|)^2}\leq
e^{-m(A)(t-s)}\frac{\|z\|}{(1-\|z\|)^2},\quad z\in \mathbb{B},\quad t\geq s\geq 0,
\]
and
\[
e^{-k(A)(t-s)}\frac{\|z\|}{(1+\|z\|)^2}\leq
\frac{\|v(z,s,t)\|}{(1+\|v(z,s,t)\|)^2},\quad z\in \mathbb{B},\quad t\geq s\geq 0.
\]
\end{proposition}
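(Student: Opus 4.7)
The plan is to reduce the proposition to an already-established existence result, together with the two main technical results of this section (Propositions \ref{lemma-transition} and \ref{prop-Schwarz}). First, I would invoke \cite[Theorem 3.1]{GHKK13}, which, under the hypothesis $m(A)>0$ alone, produces for each $s\geq 0$ and $z\in\mathbb{B}$ a unique solution $v(z,s,t)$ of the initial value problem (\ref{initial-vp2-reflexive}) on $[s,\infty)$, with $v(\cdot,s,t)$ a univalent Schwarz mapping satisfying $Dv(0,s,t)=e^{-(t-s)A}$ and the Lipschitz continuity in $t$ uniformly on $\overline{\mathbb{B}}_r$. The semigroup property (\ref{semigroup}) is then a formal consequence of uniqueness applied to the autonomous-type ODE: both sides of (\ref{semigroup}) solve the same Cauchy problem at initial time $s$ with initial datum $z$. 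This step recovers every conclusion of the proposition except biholomorphicity and the two displayed estimates.

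Next, to upgrade univalence to biholomorphicity, I would apply Proposition \ref{lemma-transition}. All its hypotheses have just been verified: $m(A)>0$, the semigroup property, the normalization $Dv(0,s,t)=e^{-(t-s)A}$, and univalence of $v(\cdot,s,t)$ on $\mathbb{B}$ for every $0\leq s\leq t<\infty$. The final clause of that proposition then gives biholomorphicity on $\mathbb{B}$ for all $0\leq s\leq t<\infty$.

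Finally, for the growth bounds, I would invoke Proposition \ref{prop-Schwarz}. Its standing hypothesis $v(z,s,t)\neq 0$ for $z\in\mathbb{B}\setminus\{0\}$ is automatic from the univalence just established together with $v(0,s,t)=0$, and the other hypotheses (semigroup property, normalization of $Dv(0,s,t)$, $m(A)>0$, reflexivity) have all been verified. The two Koebe-type inequalities in the statement then follow directly.

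The main subtlety, and essentially the only one beyond bookkeeping, is to confirm that the weaker hypothesis $m(A)>0$ suffices to cite \cite[Theorem 3.1]{GHKK13}; this is indeed the case, since the stronger condition $k_+(A)<2m(A)$ enters the theory of \cite{GHKK13} only at the stage of passing from $v$ to an $A$-normalized subordination chain via the limit (\ref{1.11}), and not in the construction of $v$ itself. Once this is noted, the proof is a direct assembly of the cited results with no further obstacle.
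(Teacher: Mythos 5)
Your proposal is correct and follows essentially the same route as the paper, which likewise obtains existence, uniqueness, univalence, the normalization and the semigroup property from \cite[Theorem 3.1]{GHKK13}, upgrades univalence to biholomorphicity via Proposition \ref{lemma-transition}, and reads off the two growth estimates from Proposition \ref{prop-Schwarz} (the hypothesis $v(z,s,t)\neq 0$ for $z\neq 0$ being guaranteed by univalence and $v(0,s,t)=0$, as you note). Your observation that $m(A)>0$ suffices here, the condition $k_+(A)<2m(A)$ being needed only for the limit $(\ref{1.11})$, matches the paper's hypotheses exactly.
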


Theorems \ref{Loewner range} and \ref{proposition-subordination-biholom}
show that \cite[Theorem 3.5]{GHKK13} can be improved so that
$f(z,t)$ is a biholomorphic subordination chain and $\bigcup_{t\geq 0}f_t(\mathbb{B})=X$
as follows.

\begin{proposition}
\label{Herglotz-to-chain-reflexive}
Let $X$, $\mathbb{B}$, $A$ and $h$ be as in
Proposition $\ref{evolution-reflexive}$. Let $s\geq 0$.
Let $v=v(z,s,t)$ be the unique Lipschitz continuous solution
on $[s,\infty)$ of
the initial value problem
$(\ref{initial-vp2-reflexive})$.
Assume that
$k_+(A)<2m(A)$.
Then, for each $s\geq 0$, the limit
\begin{equation}
\label{2.8-reflexive}
\lim_{t\to\infty}e^{tA}v(z,s,t)=f(z,s)
\end{equation}
exists uniformly on each closed ball strictly inside $\mathbb{B}$.
Moreover,
$f(z,t)$ is an $A$-normalized biholomorphic subordination chain with the transition mapping $v(z,s,t)$
such that
for each $r\in (0,1)$, there exists a constant $M(r,A)>0$ such that
\begin{equation}
\label{bounded-reflexive}
\left\| e^{-tA}f(z,t)\right\|\leq M(r,A),\quad \|z\|\leq r,\quad t\geq 0.
\end{equation}
Also, we have
$\bigcup_{t\geq 0}f_t(\mathbb{B})=X$.
\end{proposition}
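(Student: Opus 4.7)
The plan is to assemble the conclusion from ingredients already developed in this paper and in \cite{GHKK13}. The bulk of the construction, namely the existence of the limit (\ref{2.8-reflexive}), univalence of $f(\cdot,t)$ for every $t\geq 0$, the identification of $v(z,s,t)$ as the transition mapping for $f$, and the growth estimate (\ref{bounded-reflexive}), is delivered by \cite[Theorem 3.5]{GHKK13} (recalled as Theorem \ref{thm-GHKK2013}(i) above), which applies under the hypothesis $k_+(A)<2m(A)$ imposed here. What remains is to upgrade univalence to biholomorphicity and to derive the Loewner-range identity $\bigcup_{t\geq 0}f_t(\mathbb{B})=X$, neither of which was obtained in \cite{GHKK13}.

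First, I would invoke Theorem \ref{thm-GHKK2013}(i) with the given Herglotz vector field $h$. This produces $f(z,s)$ as the locally uniform limit (\ref{2.8-reflexive}), shows that $f(z,t)$ is an $A$-normalized univalent subordination chain whose transition mapping is $v(z,s,t)$, and supplies the uniform bound (\ref{bounded-reflexive}) on each closed ball $\overline{\mathbb{B}}_r$, $r\in(0,1)$.

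Second, I would apply Theorem \ref{proposition-subordination-biholom}(i) to this $f(z,t)$. All of its hypotheses are in force: $f(z,t)$ is an $A$-normalized subordination chain, it is univalent on $\mathbb{B}$ for every $t\geq 0$, and (\ref{bounded-reflexive}) furnishes the bound (\ref{A-normlized-bounded}) (take any fixed $\rho\in(0,1)$ and $M=M(\rho,A)$). The conclusion of that theorem is precisely that $f(z,t)$ is biholomorphic on $\mathbb{B}$ for every $t\geq 0$ and that $\bigcup_{t\geq 0}f_t(\mathbb{B})=X$. The surjectivity can alternatively be derived from Theorem \ref{Loewner range} applied to the now-biholomorphic chain.

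The main technical hurdle has already been cleared in Proposition \ref{lemma-transition}, Theorem \ref{Loewner range}, and Theorem \ref{proposition-subordination-biholom}, so the present statement is essentially their clean payoff. The only place the strict inequality $k_+(A)<2m(A)$ is consumed in the overall argument is in securing the uniform-in-$t$ bound (\ref{bounded-reflexive}) inside the proof of \cite[Theorem 3.5]{GHKK13}, where it balances the spectral growth of $e^{tA}$ against the Koebe-type decay (\ref{5.100a}) of $\|v(z,s,t)\|$ from Proposition \ref{evolution-reflexive}.
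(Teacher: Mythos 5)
Your proposal is correct and matches the paper's own route: the paper states this proposition precisely as the combination of \cite[Theorem 3.5]{GHKK13} (for the limit, the univalent chain with transition mapping $v$, and the bound (\ref{bounded-reflexive})) with Theorems \ref{Loewner range} and \ref{proposition-subordination-biholom} (for biholomorphicity and $\bigcup_{t\geq 0}f_t(\mathbb{B})=X$), exactly as you assemble it.
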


By using similar arguments as those for \cite[Theorem 3.10]{GHKK13}, we obtain the following theorem, which is a generalization
to reflexive complex Banach spaces of
\cite[Theorem 2.3]{Pf74} and
\cite[Theorem 6]{Po89} (see also
\cite[Theorem 8.1.6]{GK03}).
Theorem \ref{t2.30} (iii)
gives the uniqueness of the solution to the Loewner PDE.

\begin{theorem}
\label{t2.30}
Let $X$, $\mathbb{B}$, $A$ and $h$ be as in
Proposition $\ref{evolution-reflexive}$.
Let
$f(z,t)=e^{tA} z+\cdots$ be  a standard solution of
the Loewner PDE
\begin{equation}
\label{Loewner-f}
\frac{\partial f}{\partial t}(z,t)=Df(z,t)h(z,t),\quad t\in [0,\infty)\setminus E,\quad
\forall\,z\in \mathbb{B},
\end{equation}
where $E\subset [0,\infty)$ is a null set
such that $f(z,\cdot)$ is locally absolutely continuous on $[0,\infty)$ uniformly with respect to $z\in \overline{\mathbb{B}}_r$, $r\in (0,1)$.
\begin{enumerate}
\item[(i)]
If for $T>0$ given, there exist constants $\rho\in (0,1)$ and $M(T)>0$
such that
\[
\| f(z,t)\|\leq M(T),
\quad
\| z\|\leq \rho,
\quad
t\in[0,T],
\]
then $f(z,t)$ is an $A$-normalized subordination chain with the transition mappings $v(z,s,t)$,
where $v$ is the unique Lipschitz continuous solution on $[s,\infty)$ of the
initial value problem
\begin{equation}
\label{LODE}
\frac{\partial v}{\partial t}=-h(v,t) \quad \mbox{ a.e. }\quad t\geq s,\quad
v(z,s,s)=z.
\end{equation}
\item[(ii)]
If there exist constants $\rho\in (0,1)$ and $M>0$ such that
\begin{equation}
\label{normal-solution}
\| e^{-tA}f(z,t)\|\leq M,
\quad
\| z\|\leq \rho,
\quad
t\geq 0,
\end{equation}
then
$f(z,t)$ is an $A$-normalized biholomorphic subordination chain
and
$$
\bigcup_{t\geq 0}f_t(\mathbb{B})=X.
$$
\item[(iii)]
If
$k_+(A)<2m(A)$
and
(\ref{normal-solution}) holds,
then
for each $s\geq 0$ there exists the limit
\begin{equation}
\label{parametric}
\lim_{t\to\infty}e^{tA}v(z,s,t)=f(z,s)
\end{equation}
uniformly on $\overline{\mathbb{B}}_r$, $r\in (0,1)$.
\end{enumerate}
\end{theorem}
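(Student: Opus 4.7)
The plan is to derive each of the three parts from previously proved material in the excerpt, with the unique Lipschitz solution $v(z,s,t)$ of the associated initial value problem (\ref{initial-vp2-reflexive}) furnished by Proposition \ref{evolution-reflexive} serving as the candidate transition mapping of $f$.

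For part (i), I would fix $s\geq 0$ and $z\in\mathbb{B}$ and examine
\[
g_{s,z}(t):=f(v(z,s,t),t),\qquad t\in[s,\infty).
\]
By Proposition \ref{prop-Schwarz} the orbit $v(z,s,\cdot)$ stays inside a closed ball strictly smaller than $\mathbb{B}$, so the local boundedness hypothesis of (i) together with the local absolute continuity of $f(z,\cdot)$ (uniform on closed balls) and the Lipschitz estimate from Proposition \ref{lemma-transition}(i) make $g_{s,z}$ locally absolutely continuous on $[s,\infty)$. Since $X$ is reflexive, $g_{s,z}$ is strongly differentiable for a.e.\ $t\geq s$; for such $t$ outside $E\cup E_{s,z}$ both (\ref{Loewner-f}) and (\ref{LODE}) hold at $(v(z,s,t),t)$, and the chain rule gives
\[
g_{s,z}'(t)=Df(v(z,s,t),t)\bigl(-h(v(z,s,t),t)\bigr)+Df(v(z,s,t),t)\,h(v(z,s,t),t)=0.
\]
Integrating yields $f(v(z,s,t),t)=f(z,s)$, which identifies $v$ as the transition mapping of $f$ and shows that $f(z,t)$ is an $A$-normalized subordination chain.

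Parts (ii) and (iii) should then be short. Since (\ref{normal-solution}) is stronger than the hypothesis of (i), $v$ is the transition mapping of $f$, and Proposition \ref{evolution-reflexive} already supplies that each $v(\cdot,s,t)$ is biholomorphic on $\mathbb{B}$. Alternative (ii) of Theorem \ref{proposition-subordination-biholom} then applies directly and yields both the biholomorphy of every $f(\cdot,t)$ and $\bigcup_{t\geq 0}f_t(\mathbb{B})=X$, completing (ii). For (iii), biholomorphy of $v(\cdot,s,t)$ together with $v(0,s,t)=0$ forces $v(z,s,t)\neq 0$ whenever $z\neq 0$, so the added assumption $k_+(A)<2m(A)$ places us exactly in the hypotheses of Theorem \ref{subordination-reflexive}, whose conclusion is precisely (\ref{parametric}) with uniform convergence on closed balls strictly inside $\mathbb{B}$.

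The main obstacle I anticipate is the chain rule step in (i): the PDE holds off a universal null set $E$ while the ODE for $v(z,s,\cdot)$ holds off the $z$-dependent null set $E_{s,z}$, and one must combine them into a single null set of admissible times and then justify the classical chain rule in the Banach setting, invoking reflexivity for a.e.\ strong differentiability of locally absolutely continuous Banach-valued curves and the holomorphy of $Df(\cdot,t)$ to handle the composition in the space variable. Once this technical but essentially routine step is carried out, (ii) and (iii) drop out immediately from the Schwarz-mapping and Loewner-range machinery developed in Sections 3 and 4.
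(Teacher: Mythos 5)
Your proposal is correct and follows essentially the same route as the paper: the paper's proof of (i) simply cites the chain-rule argument of \cite[Theorem 3.10]{GHKK13} to obtain $f(v(z,s,t),t)=f(z,s)$ for $\|z\|\leq \rho/2$, and parts (ii) and (iii) are deduced exactly as you describe from Theorem \ref{proposition-subordination-biholom}, Proposition \ref{evolution-reflexive} and Theorem \ref{subordination-reflexive}. The one detail to add is that your differentiation of $g_{s,z}$ relies on the boundedness hypothesis, which is only assumed on $\overline{\mathbb{B}}_{\rho}$, so the identity is first obtained only for small $\|z\|$ and must then be extended to all of $\mathbb{B}$ by the identity theorem for holomorphic mappings, as the paper does.
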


\begin{proof}
(i) Let $v(z,s,t)$ be the
unique Lipschitz continuous solution on $[s,\infty)$ of the
initial value problem (\ref{LODE}).
Then, as in the proof of \cite[Theorem 3.10]{GHKK13},
we have
\[
f(v(z,s,t),t)=f(z,s),
\quad \| z\|\leq \frac{\rho}{2},
\quad 0\leq s\leq t.
\]
By the identity theorem for holomorphic mappings,
we have
\[
f(v(z,s,t),t)=f(z,s),
\quad z\in \mathbb{B},
\quad 0\leq s\leq t.
\]
Therefore, $f(z,t)$ is an $A$-normalized subordination chain with the transition mappings $v(z,s,t)$.

(ii)
By (i), Theorem \ref{proposition-subordination-biholom} and Proposition \ref{evolution-reflexive},
we obtain (ii), as desired.

(iii)
Since $v(\cdot,s,t)$ is univalent,
by Theorem \ref{subordination-reflexive},
we deduce that
(\ref{parametric}) holds.
\end{proof}

Proposition \ref{Herglotz-to-chain-reflexive}
shows that \cite[Theorem 4.1]{GHKK13} can be improved so that
the $A$-normalized subordination chain $f(z,t)$ given
by $(\ref{2.8-reflexive})$ is a biholomorphic subordination chain
and $\bigcup_{t\geq 0}f_t(\mathbb{B})=X$.
In the finite dimensional case,
Duren, Graham, Hamada and Kohr \cite[Theorem 3.1]{DGHK}
obtained the following result.

\begin{theorem}
\label{t.solutions}
Let $X$, $\mathbb{B}$, $A$ and $h$ be as in
Proposition $\ref{evolution-reflexive}$.
Assume that $k_+(A)<2m(A)$.
Let $f(z,t)$ be the $A$-normalized biholomorphic subordination chain given
by $(\ref{2.8-reflexive})$.
Also, assume that there exists a standard solution $g(z,t)$ of
$(\ref{Loewner-f})$.
If there exist constants $\rho\in (0,1)$ and  $K(T)>0$
such that
$
\|g(z,t)\|\leq K(T)$ for $\|z\|\leq \rho$, $t\in [0,T]$,
for $T>0$ given,
then $g(z,t)$ is a subordination chain
and there exists a holomorphic mapping
$\Phi: X\to X$ such that $g(z,t)=\Phi(f(z,t))$
for $z\in \mathbb{B}$ and $t\geq 0$. In addition, if $g(\cdot,t)$ is biholomorphic
on $\mathbb{B}$ for $t\geq 0$, then $\Phi$ is a biholomorphic mapping of
$X$ onto $\bigcup_{t\geq 0}g(\mathbb{B},t)$.
\end{theorem}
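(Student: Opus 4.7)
The plan is to first invoke Theorem \ref{t2.30}(i) to equip $g(z,t)$ with a subordination chain structure whose transition mapping coincides with that of $f$, and then to glue the local transitions $g(\cdot,t)\circ f(\cdot,t)^{-1}$ on the open sets $f(\mathbb{B},t)$ into a single holomorphic map $\Phi$ defined on $\bigcup_{t\geq 0} f(\mathbb{B},t) = X$. The boundedness hypothesis $\|g(z,t)\|\leq K(T)$ on $\overline{\mathbb{B}}_\rho\times [0,T]$ is precisely what Theorem \ref{t2.30}(i) requires, so it produces transition mappings $v(z,s,t)$ that solve the initial value problem (\ref{LODE}); by the uniqueness part of Proposition \ref{evolution-reflexive}, they coincide with the transition mappings of $f$. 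This yields the twin identities
\[
f(v(z,s,t),t)=f(z,s),\qquad g(v(z,s,t),t)=g(z,s),\qquad z\in\mathbb{B},\ 0\leq s\leq t,
\]
which are the algebraic engine of the remaining argument.

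For each $t\geq 0$, since $f(\cdot,t)$ is biholomorphic on $\mathbb{B}$, I would define the holomorphic map $\Phi_t := g(\cdot,t)\circ f(\cdot,t)^{-1}$ on the open set $f(\mathbb{B},t)\subset X$. For $0\leq s\leq t$ and $z\in \mathbb{B}$ the two identities above combine to give
\[
\Phi_t(f(z,s))=\Phi_t(f(v(z,s,t),t))=g(v(z,s,t),t)=g(z,s)=\Phi_s(f(z,s)),
\]
so $\Phi_t$ extends $\Phi_s$ on $f(\mathbb{B},s)$. The inclusions $f(\mathbb{B},s)\subset f(\mathbb{B},t)$ for $s\leq t$ turn $\{f(\mathbb{B},t)\}_{t\geq 0}$ into an increasing family of open sets whose union, by Proposition \ref{Herglotz-to-chain-reflexive}, is all of $X$. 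Hence the $\Phi_t$ glue unambiguously to a holomorphic mapping $\Phi: X\to X$ satisfying $\Phi(f(z,t))=g(z,t)$ for every $(z,t)\in\mathbb{B}\times [0,\infty)$, which in particular shows that $g$ is a subordination chain since $g(z,s)=\Phi(f(z,s))=\Phi(f(v(z,s,t),t))=g(v(z,s,t),t)$.

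For the final assertion, if $g(\cdot,t)$ is biholomorphic on $\mathbb{B}$ then each $\Phi_t$ is a biholomorphism of $f(\mathbb{B},t)$ onto $g(\mathbb{B},t)$. Injectivity of $\Phi$ follows by taking $w_1,w_2\in X$ with $\Phi(w_1)=\Phi(w_2)$, choosing $t$ large enough so that $\{w_1,w_2\}\subset f(\mathbb{B},t)$, and invoking injectivity of $\Phi_t$; surjectivity onto $\bigcup_{t\geq 0} g(\mathbb{B},t)$ is immediate from $g(z,t)=\Phi(f(z,t))$; and $\Phi^{-1}$ is holomorphic because on each $g(\mathbb{B},t)$ it agrees with the holomorphic map $f(\cdot,t)\circ g(\cdot,t)^{-1}$. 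The main technical checkpoint I anticipate is verifying that Theorem \ref{t2.30}(i) applies to $g$, specifically the uniform local absolute continuity of $g(z,\cdot)$ on closed balls $\overline{\mathbb{B}}_r$ for $r\in (0,1)$; this should follow from the boundedness of $g$ on $\overline{\mathbb{B}}_\rho\times [0,T]$ combined with the Cauchy estimate for $Dg$, the Loewner PDE $\partial_t g=Dg\cdot h$, and standard bounds on vector fields in $\mathcal{N}(\mathbb{B})$, but needs to be checked with care since the definition of a standard solution only gives pointwise local absolute continuity.
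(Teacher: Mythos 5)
Your argument is correct and is essentially the proof the paper intends: the paper states Theorem \ref{t.solutions} without proof, deferring to the finite-dimensional case \cite[Theorem 3.1]{DGHK}, and your construction --- identifying the transition mappings of $g$ with the unique ODE solution $v(z,s,t)$ via Theorem \ref{t2.30}(i), then gluing $\Phi_t=g(\cdot,t)\circ f(\cdot,t)^{-1}$ over the increasing exhaustion $\bigcup_{t\geq 0}f_t(\mathbb{B})=X$ from Proposition \ref{Herglotz-to-chain-reflexive} --- is exactly that argument transplanted to the Banach-space setting. The only point worth making explicit (beyond the uniform local absolute continuity checkpoint you already flag, which is handled as you suggest) is that Theorem \ref{t2.30}(i) is stated for $A$-normalized solutions while $g$ need not be $A$-normalized; however, the identity $g(v(z,s,t),t)=g(z,s)$ that you actually use is obtained without the normalization, so the step goes through.
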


\section{Existence result of solutions to the Loewner PDE on a separable reflexive complex Banach space}
\label{Existence}

In this section, let $\mathbb{B}$ be the unit ball of a separable reflexive complex Banach space $X$.
As in \cite{HK2022}, we can deduce that an $A$-normalized subordination chain on $\mathbb{B}$,
which is bounded on $\mathbb{B}_r\times [0,T]$ for each $r\in(0,1)$ and $T>0$,
is a solution to the generalized Loewner PDE
(cf. \cite[Theorem 1.10]{GHK02}).
We omit the proof for the sake of brevity.

\begin{theorem}
\label{v.differentiation}
Let $\mathbb{B}$ be the unit ball of a separable reflexive complex Banach space $X$
and let
$A\in L(X)$ with $m(A)>0$.
Let  $f(z,t)$ be an $A$-normalized subordination chain.
Let $v(z,s,t)$ be the transition mapping associated with $f(z,t)$. Let $s\geq 0$.
Then,
\begin{enumerate}
\item[(i)]
there exists a null set $E\subset [0,\infty)$
 such that for each $z\in \mathbb{B}$ and
$s\in [0,\infty)\setminus E$, the value
$
h(z,s)=-\frac{\partial v}{\partial t}(z,s,t)\vert_{t=s}
$
exists,
$h(\cdot, s)\in \mathcal{N}(\mathbb{B})$
and we have
\[
\frac{\partial v}{\partial t}(z,s, t)=-h(v(z,s,t),t),\quad
\quad t\in [s,\infty)\setminus E,\quad v(z,s,s)=z.
\]
\item[(ii)]
Assume that for $r\in (0,1)$, $T>0$,
there exists a constant $M(r,T)>0$ such that
\[
\| f(z,t)\|\leq M(r,T),
\quad
\| z\|\leq r,
\quad
t\in[0,T].
\]
Then
for each $t\in [0,\infty)\setminus {E}$,
$\frac{\partial f}{\partial t}(z,t)$ exists and
is holomorphic on $\mathbb{B}$
and
$f$ satisfies the generalized Loewner PDE
\[
\frac{\partial f}{\partial t}(z,t)=Df(z,t)h(z,t),\quad
t\in [0,\infty)\setminus {E}.
\]
%
\end{enumerate}
\end{theorem}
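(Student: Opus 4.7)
The plan is to combine the Lipschitz estimates of Proposition \ref{lemma-transition} with the Vitali-type analytic differentiation of Lemma \ref{v-partial}, following the strategy of \cite{HK2022} developed for the scalar normalization case. For part (i), I would first invoke Proposition \ref{lemma-transition}(i): for every $s\geq 0$ and $r\in (0,1)$, the map $t\mapsto v(\cdot,s,t)$ is Lipschitz on $\overline{\mathbb{B}}_r$ with constant $M(r)\|A\|$. Applying Lemma \ref{v-partial} on each ball $\mathbb{B}_{1-1/n}$ with $a(\tau)\equiv \|A\|$, and then taking a countable union of the resulting null sets over $n$ together with a union over a countable dense subset $\{s_k\}$ of $[0,\infty)$, produces a single null set $E\subset [0,\infty)$ outside of which $\partial_t v(z,s_k,t)$ exists and is holomorphic in $z\in\mathbb{B}$ for every $k$.

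Next, I would define $h(z,s):=-\lim_{\varepsilon\to 0^+}\varepsilon^{-1}(v(z,s,s+\varepsilon)-z)$ as the right derivative at the diagonal. Since $z-v(z,t_1,t_2)\in \mathcal{N}(\mathbb{B})$ by a classical lemma of Suffridge (used already in the proof of Proposition \ref{lemma-transition}), the difference quotients form a locally uniformly bounded family in $\mathcal{N}(\mathbb{B})$ whose derivatives at $0$ satisfy $(I-e^{-\varepsilon A})/\varepsilon\to A$. A Vitali-type argument, combined with the closedness of $\mathcal{N}(\mathbb{B})$ under locally uniform limits, then shows that for $s\notin E$ the limit exists on all of $\mathbb{B}$ and that $h(\cdot,s)\in \mathcal{N}(\mathbb{B})$. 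Differentiating the semigroup identity $v(v(z,s,t),t,u)=v(z,s,u)$ in $u$ at $u=t$ identifies its left side with $-h(v(z,s,t),t)$ and its right side with $\partial_t v(z,s,t)$, yielding the Loewner evolution equation for $t\in[s,\infty)\setminus E$.

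For part (ii), the boundedness hypothesis combined with Proposition \ref{proposition-subordination} shows that $f$ is locally Lipschitz in $t$ on $\overline{\mathbb{B}}_r\times[0,T]$, so a further application of Lemma \ref{v-partial} (whose null set can be absorbed into $E$) gives that $\partial_t f(z,t)$ exists and is holomorphic in $z$ for $t\notin E$. Differentiating the subordination relation $f(z,s)=f(v(z,s,t),t)$ in $t$ and substituting $\partial_t v(z,s,t)=-h(v(z,s,t),t)$ from part (i) yields
\[
(\partial_t f)(w,t)=Df(w,t)\,h(w,t)\qquad \text{at } w=v(z,s,t),
\]
which establishes the Loewner PDE on the set $v(\mathbb{B},s,t)$. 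Since $v(\cdot,s,t)$ has invertible derivative $e^{-(t-s)A}$ at the origin, this image contains a neighborhood of $0$, and the identity theorem for holomorphic mappings extends the PDE to all of $\mathbb{B}$.

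The main obstacle is arranging a single null set $E$ that works uniformly for defining $h(\cdot,s)$, for verifying the Herglotz property, and for checking the PDE at every $z\in\mathbb{B}$ rather than only on the image of the transition map. This is handled by the countable-base-point reduction together with a semigroup-based transfer of differentiability from the base points $s_k$ to arbitrary $s\notin E$.
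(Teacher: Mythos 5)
Your proposal is correct and follows essentially the route the paper itself intends: the paper omits the proof of Theorem \ref{v.differentiation}, referring to \cite{HK2022} and \cite[Theorem 1.10]{GHK02}, and your combination of the Lipschitz estimates of Proposition \ref{lemma-transition}, the separability-based Vitali differentiation Lemma \ref{v-partial} over a countable family of base points and exhausting balls, the Suffridge lemma placing the difference quotients $\varepsilon^{-1}(z-v(z,s,s+\varepsilon))$ in $\mathcal{N}(\mathbb{B})$, the semigroup transfer, and the identity-theorem extension from $v(\mathbb{B},s,t)$ is exactly that argument. One small imprecision: $\mathcal{N}(\mathbb{B})$ is not closed under locally uniform limits (the limit could satisfy $\Re\,\ell_z(h(z))=0$ somewhere), so to conclude $h(\cdot,s)\in\mathcal{N}(\mathbb{B})$ you must combine the limiting inequality $\Re\,\ell_z(h(z,s))\geq 0$ with the normalization $Dh(0,s)=A$, $m(A)>0$, and the minimum principle for the harmonic function $\zeta\mapsto\Re\bigl[\ell_u(h(\zeta u,s))/\zeta\bigr]$ to recover strict positivity.
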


\begin{remark}
(i) Theorem \ref{subordination-reflexive} and Theorem \ref{v.differentiation} (i) gives a partial answer to
Conjecture \ref{conjecture1.3}.

(ii)
Theorem \ref{v.differentiation} (ii) gives a partial positive answer to Question \ref{question1.4}.
\end{remark}

In addition, we obtain the following result
(cf. Proposition \ref{evolution-reflexive}).

\begin{theorem}
\label{evolution}
Let $X$, $\mathbb{B}$ and  $A$ be as in Theorem $\ref{v.differentiation}$.
Let $h:\mathbb{B}\times [0,\infty)\to X$ be a Herglotz vector field such that $Dh(0,t)=A$
for $t\geq 0$. Let $s\geq 0$.
Then for each $z\in\mathbb{B}$, the initial value problem
\begin{equation}
\label{initial-vp2}
\frac{\partial v}{\partial t}(z,s, t)=-h(v(z,s,t),t),\quad
\quad t\in [s,\infty)\setminus E_s,\quad v(z,s,s)=z,
\end{equation}
where $E_s\subset [s,\infty)$ is a null set which depends only on $s$,
has a unique solution $v=v(z,s,t)$ on $[s,\infty)$,
such that $v(\cdot,s, t)$ is Lipschitz continuous in $t\in[s,\infty)$
uniformly with respect to $z\in \overline{\mathbb{B}}_r$, $r\in (0,1)$, $v(\cdot,s, t)$ is a biholomorphic Schwarz
mapping which satisfy the semigroup property
$(\ref{semigroup})$
and
$
Dv(0,s,t)=e^{-(t-s)A}$,
$t\geq s$.
In addition, $v(z,s,t)$ satisfies the following estimates
\[
\frac{\|v(z,s,t)\|}{(1-\|v(z,s,t)\|)^2}\leq
e^{-m(A)(t-s)}\frac{\|z\|}{(1-\|z\|)^2},\quad z\in \mathbb{B},\quad t\geq s\geq 0,
\]
and
\[
e^{-k(A)(t-s)}\frac{\|z\|}{(1+\|z\|)^2}\leq
\frac{\|v(z,s,t)\|}{(1+\|v(z,s,t)\|)^2},\quad z\in \mathbb{B},\quad t\geq s\geq 0.
\]
\end{theorem}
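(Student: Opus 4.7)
The strategy is to upgrade Proposition \ref{evolution-reflexive} (which already produces the solution with all the listed structural properties) by replacing its $z$-dependent exceptional set $E_{s,z}$ with a single null set $E_s$ depending only on $s$. Since every separable reflexive Banach space is reflexive, Proposition \ref{evolution-reflexive} applies directly and supplies a mapping $v(z,s,t)$ such that $v(\cdot,s,t)$ is a biholomorphic Schwarz mapping with $Dv(0,s,t)=e^{-(t-s)A}$, the semigroup property \eqref{semigroup} holds, $v(\cdot,s,t)$ is Lipschitz continuous in $t$ uniformly on each $\overline{\mathbb{B}}_r$, $r\in(0,1)$, and the two growth estimates hold. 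It also gives, for each $z$, the ODE $\partial v/\partial t=-h(v,t)$ outside a null set $E_{s,z}\subset[s,\infty)$. Uniqueness is likewise inherited from that result. Thus only the removal of the $z$-dependence of the exceptional set requires proof.

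\textit{Step 1: separating the exceptional time set from $z$.} Fix $r_n=1-1/n$ and $T_m=s+m$ for $n,m\in\mathbb{N}$. The uniform Lipschitz continuity on $\overline{\mathbb{B}}_{r_n}$ supplies a constant $K_{n,m}>0$ with
\[
\| v(z,s,t_1)-v(z,s,t_2)\| \leq K_{n,m}(t_2-t_1),\quad z\in \mathbb{B}_{r_n},\quad s\leq t_1\leq t_2\leq T_m.
\]
Since $X$ is separable and reflexive, Lemma \ref{v-partial} (applied with $\Omega=\mathbb{B}_{r_n}$, $a\equiv 1$, $M=K_{n,m}$) produces a null set $E_s^{(n,m)}\subset[s,T_m]$ on whose complement $\partial v(\cdot,s,t)/\partial t$ exists and is holomorphic on $\mathbb{B}_{r_n}$. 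Setting $E_s^1=\bigcup_{n,m}E_s^{(n,m)}$ yields a null set such that for every $t\in[s,\infty)\setminus E_s^1$, $\partial v(\cdot,s,t)/\partial t$ exists and is holomorphic on $\bigcup_n\mathbb{B}_{r_n}=\mathbb{B}$.

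\textit{Step 2: promoting to the ODE on the whole ball via density.} Using separability of $X$ (and hence of $\mathbb{B}$), pick a countable dense subset $\{z_k\}_{k\in\mathbb{N}}\subset\mathbb{B}$ and set $E_s^2=\bigcup_k E_{s,z_k}$, a null set by countable additivity. Let $F\subset[0,\infty)$ be the null set of $t$ at which $h(\cdot,t)\notin\mathcal{N}(\mathbb{B})$, and define $E_s=E_s^1\cup E_s^2\cup F$. For any $t\in[s,\infty)\setminus E_s$, the map $z\mapsto \partial v(z,s,t)/\partial t$ is holomorphic on $\mathbb{B}$ by Step 1, while $z\mapsto h(v(z,s,t),t)$ is holomorphic on $\mathbb{B}$ as the composition of two holomorphic mappings. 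These two holomorphic mappings coincide at every point of the dense set $\{z_k\}$ by the definition of $E_s^2$, hence by continuity they coincide on all of $\mathbb{B}$, giving the desired ODE \eqref{initial-vp2}.

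The main obstacle is Step 1, the Vitali-type upgrade from a.e.-in-$t$ differentiability (for each fixed $z$) to the existence of a single null exceptional time-set on whose complement the derivative exists as a bona fide holomorphic function on $\mathbb{B}$; this is precisely where separability of $X$ is essential, both through Lemma \ref{v-partial} and through the countable dense subset used in Step 2. All remaining conclusions of the theorem — the semigroup law, the normalizations $v(z,s,s)=z$ and $Dv(0,s,t)=e^{-(t-s)A}$, biholomorphicity of $v(\cdot,s,t)$ as a Schwarz mapping, the Lipschitz continuity in $t$, the growth estimates, and the uniqueness of $v$ — transfer verbatim from Proposition \ref{evolution-reflexive}.
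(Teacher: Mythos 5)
Your proof is correct, and its backbone is the same as the paper's: the decisive step in both is Lemma \ref{v-partial}, which converts the uniform-in-$z$ Lipschitz continuity in $t$ into a single null set off which $\partial v/\partial t$ exists and is holomorphic on the whole ball, followed by a countable-density argument to identify that derivative with $-h(v,t)$ for a.e.\ $t$ independently of $z$. The execution differs in two respects. First, the paper reruns the Picard iteration (following \cite[Theorem 3.1]{GHKK13}) so as to have the integral equation $v(z,t)=z-\int_s^t h(v(z,\tau),\tau)\,d\tau$ explicitly in hand, whereas you delegate the construction, the semigroup/normalization/growth properties, and uniqueness wholesale to Proposition \ref{evolution-reflexive}; this is legitimate since a separable reflexive space is reflexive and a solution with a $z$-independent exceptional set is a fortiori a solution in the sense of that proposition. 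Second, and more substantively, the identification step is done differently: the paper applies a countable weakly dense family of functionals $\{z_k^*\}\subset X^*$ to the integral equation, differentiates the resulting scalar equations a.e., and recovers the vector identity from density in the dual; you instead use a countable norm-dense set of points $\{z_k\}\subset\mathbb{B}$, invoke the already-known ODE at each $z_k$ off the union $\bigcup_k E_{s,z_k}$, and conclude by observing that two mappings holomorphic (hence continuous) in $z$ that agree on a dense set agree everywhere. Your variant buys a somewhat cleaner argument — it avoids having to justify that the scalar exceptional sets $E^2_{s,c,k}$ are independent of $z$ — at the cost of quoting Proposition \ref{evolution-reflexive} as a black box; both routes use separability of $X$ in an essential and parallel way (separability of $X^*$ versus separability of $\mathbb{B}$, in addition to its role in Lemma \ref{v-partial}).
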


\begin{proof}
As in the proof of \cite[Theorem 2.1]{Pf74} (see also \cite[Theorem 3.1]{GHKK13}), we shall apply the
classical method of Picard iteration to construct the
solution.
Let $s\geq 0$ and $r\in (0,1)$ be fixed.
We will show that if
$z\in \overline{\mathbb{B}}_r$ and $s\geq 0$, then the initial value
problem (\ref{initial-vp2}) has a unique solution on each interval $[s,T]$
for $T>s$.

\cite[Lemma 2.4]{GHKK13} and the hypothesis
imply that for each $r\in (0,1)$, there is $K=K(r,A)>0$ such that
\begin{equation}
\label{2.2}
\|h(z,t)\|\leq K(r,A),\quad \|z\|\leq r,\quad t\geq 0.
\end{equation}

Let $R=(1+r)/2$ and
$c=\min\left\{\frac{1-r}{2K(R,A)}, T-s\right\}.$
For fixed $z\in \overline{\mathbb{B}}_r$, consider the Picard iterates
on $[s,s+c]$: $v_0(z,t)=z$ and
\[
v_m(z,t)=z-\int_s^th(v_{m-1}(z,\tau),\tau)d\tau,\quad
m\in\mathbb{N}.
\]
It is proved in \cite[Theorem 3.1]{GHKK13} that
the mapping
$v(z,t)=\lim_{m\to\infty}v_m(z,t)$
is well defined on $[s,s+c]$,
$v(z,t)$ is holomorphic with respect to $z$, is strongly continuous with respect to $t\in [s,s+c]$
and satisfies the integral equation
\[
v(z,t)=z-\int_s^th(v(z,\tau),\tau)d\tau,\quad v(z,s)=z,
\]
for $t\in [s,s+c]$ and $z\in\overline{\mathbb{B}}_r$.
In view of this equation and the relation
(\ref{2.2}), we obtain that $v(z,\cdot)$ is Lipschitz continuous on
$[s,s+c]$ uniformly with respect to $z\in \overline{\mathbb{B}}_r$.
Then
there exists a null set $E_{s,c}\subset [s,s+c]$
such that
\begin{equation}
\label{partial}
\frac{\partial v}{\partial t}=-h(v,t),\quad
\| z\|< r,
\quad
t\in [s,s+c]\setminus E_{s,c}.
\end{equation}
Indeed, by Lemma \ref{v-partial},
there exists a null set $E^1_{s,c}\subset [s,s+c]$
such that
$\displaystyle\frac{\partial v}{\partial t}$ exists and is holomorphic on $\| z\|<r$
for $t\in [s,s+c]\setminus E^1_{s,c}$.
Since $X$ is separable, the dual space $X^*$ has a countable weakly dense subset $\{z_k^*\}$.
Since
$$z_k^*(v(z,t))=z^*_k(z)-\int_s^tz_k^*(h(v(z,\tau),\tau))d\tau,\quad
\| z\|<r,
\quad
 t\in [s,s+c],$$
by differentiating with respect to $t$
the real and imaginary parts in the above equality,
we obtain that for each $k\in\mathbb{N}$,
there exists a null set $E_{s,c,k}^2\subset [s,s+c]$
such that
$$z_k^*\left(\frac{\partial v}{\partial t}\right)=-z_k^*(h(v,t)),
\quad
\| z\|<r,
\quad
t\in [s,s+c]\setminus E_{s,c,k}^2.$$
Then, for $E_{s,c}=E_{s,c}^1\cup (\cup_{k}E_{s,c,k}^2)$, we have
\[
z_k^*\left(\frac{\partial v}{\partial t}\right)=-z_k^*(h(v,t)),
\quad
\| z\|<r,
\quad t\in [s,s+c]\setminus E_{s,c},\quad k\in\mathbb{N}.
\]
Since $\{z_k^*\}$ is weakly dense in $X^*$,
we obtain the relation (\ref{partial}), as desired.

The rest of the proof is similar to that in
\cite[Theorem 3.1]{GHKK13}. We omit the details.
\end{proof}

The following theorem gives a positive answer to Conjecture \ref{c.biholomorphic}, 
when $X$ is a separable reflexive complex Banach space.

\begin{theorem}
\label{Herglotz-to-chain}
Let $X$, $\mathbb{B}$, $A$ and $h$ be as in Theorem $\ref{evolution}$. Let $s\geq 0$ and
let $v=v(z,s,t)$ be the unique Lipschitz continuous solution
on $[s,\infty)$ of
the initial value problem
$(\ref{initial-vp2})$.
Assume that
$k_+(A)<2m(A)$.
Let
$f(z,t)$ be the $A$-normalized biholomorphic subordination chain given by
$(\ref{2.8-reflexive})$.
Then there exists a null set $E\subset [0,\infty)$ such that
$f(z,t)$ is a standard solution to the Loewner PDE
\begin{equation}
\label{LPDE7}
\frac{\partial f}{\partial t}(z,t)=Df(z,t)h(z,t),\quad
t\in [0,\infty)\setminus {E}.
\end{equation}
\end{theorem}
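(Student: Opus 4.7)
The plan is to combine Proposition~\ref{Herglotz-to-chain-reflexive} and Theorem~\ref{v.differentiation} to realize $f(z,t)$ as a standard solution of the Loewner PDE driven by $h$. First I would invoke Proposition~\ref{Herglotz-to-chain-reflexive}, which guarantees that
\[
f(z,s)=\lim_{t\to\infty}e^{tA}v(z,s,t)
\]
is an $A$-normalized biholomorphic subordination chain whose transition mapping is precisely the given $v(z,s,t)$, and which satisfies the local normality estimate $\|e^{-tA}f(z,t)\|\le M(r,A)$ for $\|z\|\le r$, $t\ge 0$. Combining this bound with the operator-norm estimate $\|e^{tA}\|\le e^{k(A)t}$ from Lemma~\ref{l1.30}, one obtains, for each $T>0$ and $r\in(0,1)$, a constant $M(r,T)>0$ such that $\|f(z,t)\|\le M(r,T)$ for $\|z\|\le r$ and $t\in[0,T]$. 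This is exactly the local boundedness hypothesis required in Theorem~\ref{v.differentiation}(ii).

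Next I would apply Theorem~\ref{v.differentiation} to $f$. Part (i) produces a null set $E\subset[0,\infty)$ together with a vector field
\[
\tilde h(z,s)=-\left.\frac{\partial v}{\partial t}(z,s,t)\right|_{t=s}
\]
satisfying $\tilde h(\cdot,s)\in\mathcal{N}(\mathbb{B})$ and $\frac{\partial v}{\partial t}(z,s,t)=-\tilde h(v(z,s,t),t)$ for $t\in[s,\infty)\setminus E$. Part (ii) then yields that, for every $t\in[0,\infty)\setminus E$, $\frac{\partial f}{\partial t}(\cdot,t)$ exists and is holomorphic on $\mathbb{B}$, and
\[
\frac{\partial f}{\partial t}(z,t)=Df(z,t)\tilde h(z,t),\quad z\in\mathbb{B}.
\]

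The last step—and the only place where the given Herglotz vector field $h$ re-enters—is to identify $\tilde h$ with $h$. By the construction of $v$ in Theorem~\ref{evolution}, $\frac{\partial v}{\partial t}(z,s,t)=-h(v(z,s,t),t)$ for $t\in[s,\infty)\setminus E_s$; evaluating at $t=s$ and using $v(z,s,s)=z$ gives $\tilde h(z,s)=h(z,s)$ for every $z\in\mathbb{B}$ and every $s$ outside a null set, after enlarging $E$ if necessary. Substituting into the generalized Loewner PDE above yields $\frac{\partial f}{\partial t}(z,t)=Df(z,t)h(z,t)$ for $t\in[0,\infty)\setminus E$, and since $h$ is a Herglotz vector field by hypothesis, Definition~\ref{d.standard}(ii) shows that $f(z,t)$ is a standard solution of (\ref{LPDE7}). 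I do not foresee a substantive obstacle: the hard analytic work has already been packaged in Proposition~\ref{Herglotz-to-chain-reflexive} (biholomorphy and normality of the subordination chain) and in Theorem~\ref{v.differentiation} (the Vitali-type differentiation on separable reflexive spaces). The only delicate point is ensuring compatibility of the several null sets arising in Theorems~\ref{evolution} and~\ref{v.differentiation}(i), which is a bookkeeping matter rather than a genuine difficulty.
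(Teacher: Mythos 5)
Your overall strategy is sound and close in spirit to the paper's: both arguments rest on the normality estimate from Proposition \ref{Herglotz-to-chain-reflexive} (which also supplies the fact that $v$ is the transition mapping of $f$) and on the differentiation machinery of Theorem \ref{v.differentiation}. Your deduction of the local bound $\|f(z,t)\|\le M(r,T)$ on $\overline{\mathbb{B}}_r\times[0,T]$ from $\|e^{-tA}f(z,t)\|\le M(r,A)$ and Lemma \ref{l1.30} is correct, and so is the application of Theorem \ref{v.differentiation} to produce the intrinsic vector field $\tilde h$ and the generalized Loewner PDE $\frac{\partial f}{\partial t}=Df\cdot\tilde h$.

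The gap is in the identification $\tilde h=h$, which you dismiss as bookkeeping but which is the one place your route genuinely diverges from the paper's. You justify it by ``evaluating at $t=s$'' the relation $\frac{\partial v}{\partial t}(z,s,t)=-h(v(z,s,t),t)$; but Theorem \ref{evolution} only gives this for $t\in[s,\infty)\setminus E_s$, where $E_s$ is a null set depending on $s$ that may perfectly well contain the endpoint $s$ itself, and one cannot conclude ``$s\notin E_s$ for a.e.\ $s$'' without a Fubini-type argument on $\{(s,t):t\in E_s\}$ that you have not supplied. Two repairs are available: (a) from the integral equation $v(z,s,t)=z-\int_s^t h(v(z,s,\tau),\tau)\,d\tau$ and the uniform local Lipschitz continuity of $h(\cdot,\tau)$ one gets $\tilde h(z,s)=\lim_{t\to s^+}\frac{1}{t-s}\int_s^t h(z,\tau)\,d\tau=h(z,s)$ whenever $s$ is a right Lebesgue point of $h(z,\cdot)$, which holds for a.e.\ $s$ for each fixed $z$, and a countable dense set of $z$ together with holomorphy upgrades this to all $z$; or (b) for a.e.\ $t$ both $\frac{\partial v}{\partial t}(z,0,t)=-h(v(z,0,t),t)$ and $\frac{\partial v}{\partial t}(z,0,t)=-\tilde h(v(z,0,t),t)$ hold, so $\tilde h(\cdot,t)=h(\cdot,t)$ on the open set $v(\mathbb{B},0,t)$ and hence on all of $\mathbb{B}$ by the identity theorem. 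The paper avoids the issue altogether: it differentiates the identity $f(v(z,0,t),t)=f(z,0)$ at a generic $t$, using the ODE for $v$ with the given $h$ directly and the continuity of $Df$ from Proposition \ref{proposition-subordination}, to obtain $\frac{\partial f}{\partial t}=Df\cdot h$ first on $v(\mathbb{B},0,t)$ and then on $\mathbb{B}$ by the identity theorem, so no identification of vector fields is ever needed. With repair (a) or (b) inserted, your proof is complete.
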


\begin{proof}
From (\ref{bounded-reflexive}) and Theorem \ref{v.differentiation},
we deduce that there exists a null set $E\subset [0,\infty)$ such that
$\frac{\partial f}{\partial t}(z,t)$ exists
and
is holomorphic on $\mathbb{B}$ for $t\in [0,\infty)\setminus E$.
Also, there exists a null set $E_0\subset [s,\infty)$
such that
\[
\frac{\partial v}{\partial t}(z,0, t)=-h(v(z,0,t),t),\quad
\quad t\in [0,\infty)\setminus E_0,\quad v(z,0,0)=z,
\]
holds.
Then for $t\in [0,\infty)\setminus (E\cup E_{0})$ and small $\delta>0$,
we have
\begin{eqnarray*}
0&=&\frac{f(v(z,0,t+\delta),t+\delta)-f(v(z,0,t),t)}{\delta}
\\
&=&
\frac{f(v(z,0,t+\delta),t+\delta)-f(v(z,0,t),t+\delta)}{\delta}
\\
&&
+\frac{f(v(z,0,t),t+\delta)-f(v(z,0,t),t)}{\delta}
\\
&=&
\int_0^1 \Big\{Df(v(z,0,t)+\tau(v(z,0,t+\delta)-v(z,0,t)),t+\delta)
\\
&&\times \frac{v(z,0,t+\delta)-v(z,0,t)}{\delta}\Big\}d\tau
\\
&&
+\frac{f(v(z,0,t),t+\delta)-f(v(z,0,t),t)}{\delta}.
\end{eqnarray*}
Since $Df(z,t)$ is continuous on $\mathbb{B}\times [0,\infty)$ by
(\ref{bounded-reflexive}) and Proposition \ref{proposition-subordination},
letting $\delta\to 0^+$,
we obtain that
\begin{eqnarray*}
0
&=&
Df(v(z,0,t),t)\frac{\partial v}{\partial t}(z,0,t)
+\frac{\partial f}{\partial t}(v(z,0,t),t)
\\
&=&
-Df(v(z,0,t),t)h(v(z,0,t),t)+\frac{\partial f}{\partial t}(v(z,0,t),t).
\end{eqnarray*}
Since $Dv(0,0,t)=e^{-tA}$,
$v(\mathbb{B}, 0,t)$ contains an open subset of $\mathbb{B}$
by the inverse mapping theorem.
Therefore, by the identity theorem for holomorphic mappings,
we deduce that $f(z,t)$ is a standard solution of the Loewner PDE (\ref{LPDE7}),
as desired.
\end{proof}







\end{document}